\begin{document}

\title{The full exceptional collections of categorical resolutions of curves}
\author{Zhaoting Wei}
\address{Department of Mathematics, 831 E 3rd Street, Indiana University, Bloomington, IN, 47405}
\email{zhaotwei@indiana.edu}

\newcommand{\coh}{\text{coh}}
\newcommand{\perf}{\text{perf}}
\newcommand{\Hom}{\text{Hom}}
\newcommand{\Pic}{\text{Pic}}
\newcommand{\red}{\text{red}}
\newcommand{\Spec}{\text{Spec}}
\newcommand{\Frac}{\text{Frac}}
\newcommand{\rD}{\mathrm{D}}

\newtheorem{thm}{Theorem}[section]
\newtheorem{lemma}[thm]{Lemma}
\newtheorem{prop}[thm]{Proposition}
\newtheorem{coro}[thm]{Corollary}
\theoremstyle{definition}\newtheorem{defi}{Definition}[section]
\theoremstyle{remark}\newtheorem{eg}{Example}
\theoremstyle{remark}\newtheorem{rmk}{Remark}

\maketitle
%\begin{abstract}

%\end{abstract}

%\tableofcontents
\begin{abstract}
This paper gives a complete answer of the following question: which (singular, projective) curves have a categorical resolution of singularities which admits a full exceptional collection? We   prove that such full exceptional collection exists if and only if the geometric genus of the curve equals to $0$. Moreover we can also prove that a curve with geometric genus equal or greater than $1$ cannot have  a categorical resolution of singularities which has a tilting object. The proofs of both results are given by a careful study of the Grothendieck group and the Picard group of that curve.
\end{abstract}

\small{Keywords: categorical resolution, singular curve, K-theory, exceptional collection, tilting object

MSC: 14H20, 14F05, 18E30, 18F30}

\section{Introduction}\label{Introduction}
For a triangulated category $\mathcal{C}$, having a full exceptional collection is a very good property.
Recall that the definition of full exceptional collection is as follows.

\begin{defi}\label{defi:full excep coll}
A full exceptional collection of a
triangulated category $\mathcal{C}$ is a collection $\{A_1\ldots A_n\}$ of objects such that
\begin{enumerate}
\item for all $i$ one has $\Hom_{\mathcal{C}}(A_i,A_i)=k$ and $\Hom_{\mathcal{C}}(A_i,A_i[l])=0$ for all $l\neq 0$;
\item for all $1\leq i<j\leq n$ one has $\Hom_{\mathcal{C}}(A_j,A_i[l])=0$ for all $l\in \mathbb{Z}$;
\item  the smallest triangulated subcategory of $\mathcal{C}$  containing
$A_1,\ldots, A_n$ coincides with $\mathcal{C}$.
\end{enumerate}
\end{defi}

However it is not very common that a triangulated category $\mathcal{C}$ has a full exceptional collection. In algebraic geometry, it is well-known that for a smooth projective curve $X$ over an algebraically closed field $k$, its bounded derived category of coherent sheaves $\rD^b(\coh(X))$  has a full exceptional collection if and only if the genus of $X$ equals to $0$.

Moreover for a singular projective curve $X$ and a (geometric) resolution of singularities $\widetilde{X}\to X$, the geometric genus of $\widetilde{X}$ and $X$ are equal, hence it is clear that $\rD^b(\coh(\widetilde{X}))$ has a full exceptional collection if and only if the geometric genus of $X$ equals to $0$.

In this paper we would like to consider the categorical resolution of $X$, which is introduced in \cite{kuznetsov2008lefschetz}.

\begin{defi}\label{defi: categorical resolution}[\cite{kuznetsov2008lefschetz} Definition 3.2 or \cite{kuznetsov2014categorical} Definition 1.3]
A categorical resolution of a scheme $X$ is a smooth, cocomplete,
compactly generated, triangulated category $\mathscr{T}$ with an adjoint pair of triangulated
functors
$$
\pi^*: \rD(X)\to \mathscr{T} \text{ and } \pi_*: \mathscr{T}\to \rD(X)
$$
such that
\begin{enumerate}
\item $\pi_*\circ \pi^*=id$;
\item both $\pi_*$ and $\pi^*$ commute with arbitrary direct sums;
\item $\pi_*(\mathscr{T}^c)\subset \rD^b(\coh(X))$ where $\mathscr{T}^c$ denotes the full subcategory of $\mathscr{T}$ which consists of compact objects.
\end{enumerate}
\end{defi}

\begin{rmk}
The first property implies that $\pi^*$ is fully faithful and the second property implies that $\pi^*(\rD^{\perf}(X))\subset \mathscr{T}^c$.
\end{rmk}

\begin{rmk}
The categorical resolution of $X$ is not necessarily unique.
\end{rmk}

\begin{rmk}
In this paper we will not discuss further on the smoothness of a triangulated category and the interested readers may refer to \cite{kuznetsov2014categorical} Section 1. Moreover, the main result in this paper does not depend on the smoothness, see Corollary \ref{coro: non-exist of full excep collection for factor through reduced case} and Corollary \ref{coro: non-exist of full excep collection for factor through general case} below.
\end{rmk}

We are interested in the question that when does $\mathscr{T}^c$ have a full exceptional collection. If $X$ is an   projective curve of geometric genus $g=0$, it can be deduced from the construction in \cite{kuznetsov2014categorical} that there exists a categorical resolution $(\mathscr{T},\pi^*,\pi_*)$ of $X$ such that $\mathscr{T}^c$ has a full exceptional collection. See Proposition \ref{prop: g=0 has a full exc coll} below.

The main result of this paper is the following theorem, which rules out the possibility for any categorical resolution of a curve with geometric genus $g \geq 1$ has a full exceptional collection.

\begin{thm}\label{thm: non-exist of full excep collection in the introduction}[See Theorem \ref{thm: non-exist of full excep collection general case} below]
Let $X$ be a  projective curve over an algebraically closed field $k$. Let $(\mathscr{T},\pi^*,\pi_*)$ be a categorical resolution of $X$. If the geometric genus of $X$ is $\geq 1$, then $\mathscr{T}^c$ cannot have a full exceptional collection.

In other words, $X$ has a categorical resolution which admits a full exceptional collection if and only if the geometric genus of $X$ equals to $0$.
\end{thm}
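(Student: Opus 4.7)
The plan is to obstruct the existence of a full exceptional collection by a careful $K_0$-theoretic analysis, reducing ultimately to a non-finite-generation statement about the Picard group. The starting point is the standard observation that if $\mathscr{T}^c$ admits a full exceptional collection $\{A_1,\dots,A_n\}$, then the classes $[A_i]$ form a $\mathbb{Z}$-basis of $K_0(\mathscr{T}^c)$, so $K_0(\mathscr{T}^c)\cong\mathbb{Z}^n$ is a finitely generated free abelian group.

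Next I would propagate this finite generation down to $\Pic(X)$. The condition $\pi_*\circ\pi^*=\mathrm{id}$ implies that for any line bundle $L$ on $X$ one has $\pi_*(\pi^*L)\cong L$, so the image of the map $\pi_*\colon K_0(\mathscr{T}^c)\to K_0(\rD^b(\coh(X)))$ contains the class $[L]$ for every $L\in\Pic(X)$. Since the source is finitely generated, so is this image, and hence so is the subgroup of $K_0(\rD^b(\coh(X)))$ generated by $\{[L]-[\mathcal{O}_X]:L\in\Pic(X)\}$. Post-composing with a first-Chern-class map $c_1\colon K_0(\rD^b(\coh(X)))\to\Pic(X)$---constructed via the determinant of a locally free resolution, which exists on a projective curve---retracts this subgroup onto $\Pic(X)$, forcing $\Pic(X)$ itself to be finitely generated.

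I would then derive a contradiction under the hypothesis $g\geq 1$. After reducing to the reduced case, consider the normalization $\nu\colon\widetilde{X}\to X$. The short exact sequence
\[
1\to\mathcal{O}_X^*\to\nu_*\mathcal{O}_{\widetilde{X}}^*\to\mathcal{F}\to 1,
\]
with $\mathcal{F}$ a skyscraper on the singular locus, together with the vanishing $R^1\nu_*\mathcal{O}_{\widetilde{X}}^*=0$ (since $\nu$ is affine), yields a surjection $\Pic(X)\twoheadrightarrow\Pic(\widetilde{X})$. It therefore suffices to show that $\Pic(\widetilde{X})$ is not finitely generated. But $\Pic^0(\widetilde{X})$ is the group of $k$-points of the Jacobian of $\widetilde{X}$, an abelian variety of positive dimension $g$, and such a group over an algebraically closed field contains $\ell^n$-torsion of cardinality $\ell^{2gn}$ for every prime $\ell\neq\mathrm{char}(k)$ and every $n\geq 1$; hence its torsion subgroup is infinite and it cannot be finitely generated.

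The main obstacle I anticipate lies in the middle step---showing that finite generation of $K_0(\mathscr{T}^c)$ really does force finite generation of $\Pic(X)$. A naive attempt to identify $K_0(\rD^{\perf}(X))$ with a subgroup of $K_0(\mathscr{T}^c)$ via $\pi^*$ need not succeed, since fully faithful functors between triangulated categories do not in general induce injections on $K_0$, and the Cartan map $K_0(\rD^{\perf}(X))\to K_0(\rD^b(\coh(X)))$ can have a kernel for singular $X$. The strategy above sidesteps these issues by working with the image of $\pi_*$ and extracting $\Pic(X)$ via the Chern-class retraction, but the precise construction of $c_1$ on $K_0(\rD^b(\coh(X)))$ for a possibly singular projective curve---and the verification that it is a genuine retraction of $L\mapsto[L]-[\mathcal{O}_X]$---is where the bulk of the technical work will be.
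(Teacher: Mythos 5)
Your opening and closing steps are sound: a full exceptional collection does force $K_0(\mathscr{T}^c)\cong\mathbb{Z}^n$, the identity $\pi_*\circ\pi^*=\mathrm{id}$ does place every class $[L]$, $L\in\Pic(X)$, in the image of $\pi_*\colon K_0(\mathscr{T}^c)\to G_0(X)$ (this is exactly the factorization of the Cartan homomorphism through $K_0(\mathscr{T}^c)$ that the paper uses), and the torsion-of-the-Jacobian argument for non-finite-generation of $\Pic(\widetilde{X})$ is the same as the paper's Theorem \ref{thm: picard group of curves genus >1} and Corollary \ref{coro: Picard is infinitely generated}. The gap is precisely the step you flagged as ``the bulk of the technical work'': the retraction $c_1\colon G_0(X)\to\Pic(X)$ does not exist for singular $X$, and no amount of technical work will produce it. A coherent sheaf on a singular curve need not admit a finite locally free resolution --- that is exactly the discrepancy between $K_0(X)$ and $G_0(X)$ --- so ``the determinant of a locally free resolution'' is undefined on $G_0(X)$. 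Worse, no homomorphism from the subgroup of $G_0(X)$ generated by line-bundle classes onto $\Pic(X)$ can exist even abstractly: for a nodal cubic $X$ the localization sequence $G_0(\{p\})\to G_0(X)\to G_0(\mathbb{G}_m)\to 0$ shows $G_0(X)$ is generated by two elements, while $\Pic(X)\cong\mathbb{Z}\times k^{*}$ is not finitely generated over an algebraically closed $k$. Hence the map $L\mapsto[L]$ from $\Pic(X)$ to $G_0(X)$ has enormous kernel, and finite generation of the subgroup $\langle[L]-[\mathcal{O}_X]\rangle\subset G_0(X)$ implies nothing about $\Pic(X)$. (The nodal cubic has geometric genus $0$, so it does not contradict the theorem --- but it does refute your implication.)

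The paper's proof circumvents exactly this obstruction by never asking for a determinant on $G_0$ of the singular curve. It restricts to the smooth locus $U\subset X$, where the Cartan map $c\colon K_0(U)\to G_0(U)$ is an isomorphism (Proposition \ref{prop: K=G for regular schemes}) and $\det$ is honestly defined on $K_0(U)$; finite generation of the image of $c\colon K_0(X)\to G_0(X)$ then forces finite generation of the image of $K_0(X)\to K_0(U)\to\Pic(U)$, which is all of $\Pic(U)$ by surjectivity of $\det$ and of the restriction $\Pic(X)\to\Pic(U)$ (Lemma \ref{lemma: ext of line bundles on singular curves}). The contradiction comes from Lemma \ref{lemma: Picard gp of U is infi generated}: removing finitely many points from a genus $\geq 1$ curve kills only a finitely generated subgroup of $\Pic^0$, so $\Pic(U)$ remains infinitely generated. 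You would need to import this ``restrict to the smooth locus'' step to repair your argument, and note also that your phrase ``after reducing to the reduced case'' hides real content: the square relating $K_0(X)$, $K_0(X_{\red})$, $G_0(X_{\red})$, $G_0(X)$ does not commute, and the paper's Section \ref{section: the general case} has to introduce a correction factor $n$ (Lemma \ref{lemma: n times reduced in G_0} and Proposition \ref{prop: n times reduced for general elements in G_0}) to handle non-reduced curves.
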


\begin{rmk}\label{rmk: extist of f.d. algebra for g=0}
In a recent paper \cite{burban2015singular} a result which is related to the above claim has been proved. Actually it has been proved that if $X$ is a reduced rational curve, then there exists a categorical resolution $(\mathscr{T},\pi^*,\pi_*)$ of $X$ such that $\mathscr{T}^c$ has a tilting object, which in general does not come from an exceptional collection. See \cite{burban2015singular} Theorem 7.4.
\end{rmk}

Recall that the definition of tilting object is given as follows.

\begin{defi}\label{defi: tilting object}
Let $\mathcal{C}$ be a triangulated category. A tilting object is an object $L$ of $\mathcal{C}$ which satisfies the following properties.
\begin{enumerate}
\item $L$ is a compact object of $\mathcal{C}$;
\item $\Hom_{\mathcal{C}}(L,L[i])=0$ for any non-zero integer $i$;
\item the smallest thick triangulated subcategory of $\mathcal{C}$ which contains $L$ is $\mathcal{C}$ itself.
\end{enumerate}

For a tilting object let $\Lambda=\text{End}_{\mathcal{C}}(L)$. Then it can be shown that we have equivalence of triangulated categories
$$\mathcal{C}\cong \rD^b(\Lambda-\text{mod})$$
where $\rD^b(\Lambda-\text{mod})$ is the derived category of bounded complexes of finitely generated $\Lambda$-modules.
\end{defi}

Actually we can also prove a related result in the $g\geq 1$ case. (thanks to Igor Burban for pointing it out)

\begin{thm}\label{thm: non-exist of f.d algebra as a cat resolution in the introduction} [See Theorem \ref{thm: non-exist of f.d algebra as a cat resolution} below]
Let $X$ be a projective curve over an algebraically closed field $k$ of geometric genus $\geq 1$. Let $(\mathscr{T},\pi^*,\pi_*)$ be a categorical resolution of $X$. Then $\mathscr{T}^c$ cannot have a tilting object, moreover there cannot be a finite dimensional $k$-algebra $\Lambda$ of finite global dimension such that
$$
\mathscr{T}^c\cong \rD^b(\Lambda-\text{mod})
$$
\end{thm}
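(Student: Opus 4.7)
The plan is to mirror the strategy of Theorem \ref{thm: non-exist of full excep collection in the introduction} and reduce both assertions to a single Grothendieck-group obstruction. The central observation is that the hypothesis --- existence of a tilting object, or an equivalence $\mathscr{T}^c \cong \rD^b(\Lambda\text{-mod})$ for $\Lambda$ finite-dimensional of finite global dimension --- forces $K_0(\mathscr{T}^c)$ to be a finitely generated abelian group, whereas the Grothendieck/Picard-group information that $\pi^*$ transports from $X$ into $\mathscr{T}^c$ is too large for any finitely generated group to accommodate when the geometric genus of $X$ is $\geq 1$.

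For the ``moreover'' clause, if $\mathscr{T}^c \cong \rD^b(\Lambda\text{-mod})$ with $\Lambda$ finite-dimensional of finite global dimension, then the classical fact that the Cartan matrix of such a $\Lambda$ is invertible over $\mathbb{Q}$ implies $K_0(\Lambda\text{-mod})$ is free abelian of finite rank, equal to the number of isomorphism classes of simple $\Lambda$-modules. For the tilting-object clause, if $L \in \mathscr{T}^c$ is a tilting object with $\Lambda := \mathrm{End}_{\mathscr{T}}(L)$, then the equivalence $\mathscr{T}^c \cong \rD^b(\Lambda\text{-mod})$ recorded in Definition \ref{defi: tilting object} shows that $K_0(\mathscr{T}^c)$ is generated by the (finitely many) classes of indecomposable summands of $L$, and is therefore finitely generated.

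The second half of the argument re-uses a Grothendieck-group analysis parallel to the one underlying Theorem \ref{thm: non-exist of full excep collection in the introduction}: starting from $\pi_* \circ \pi^* = \mathrm{id}$, together with the determinant homomorphism $K_0(\rD^{\perf}(X)) \to \Pic(X)$, one transports enough classes of line bundles from $X$ into $K_0(\mathscr{T}^c)$ to conclude that $\Pic(X)$ would have to be finitely generated whenever $K_0(\mathscr{T}^c)$ is. However, for $X$ of geometric genus $g \geq 1$, the Picard group $\Pic(X)$ is \emph{not} finitely generated: through the normalization $\nu : \widetilde{X} \to X_{\mathrm{red}} \to X$ one obtains a surjection onto $\Pic(\widetilde{X})$, and $\Pic(\widetilde{X})$ contains the Jacobian $J(\widetilde{X})(k)$, the $k$-points of an abelian variety of positive dimension over the algebraically closed field $k$, which is divisible and hence not finitely generated. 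This contradiction settles both clauses simultaneously.

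The main obstacle I anticipate lies in the precise formulation of how $\pi^*$ and $\pi_*$ inject a sufficiently large piece of $\Pic(X)$ (or of $K_0(\rD^{\perf}(X))$) into $K_0(\mathscr{T}^c)$: because $\pi_*$ on compacts lands in $\rD^b(\coh(X))$ rather than in $\rD^{\perf}(X)$, the naive splitting $\pi_* \circ \pi^* = \mathrm{id}$ does not immediately produce a retract of $K_0(\pi^*) : K_0(\rD^{\perf}(X)) \to K_0(\mathscr{T}^c)$, and one must instead track individual classes of line bundles and invoke the Picard-group computation directly. This technical upgrade is presumably what occupies the analogous step in the proof of Theorem \ref{thm: non-exist of full excep collection in the introduction}, and the present theorem should follow from that apparatus by the same finite-generation-of-$K_0$ route.
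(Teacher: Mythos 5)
Your proposal follows the paper's own route: the paper likewise reduces both clauses to showing that $K_0(\mathscr{T}^c)$ would be finitely generated, and contradicts this with Proposition \ref{prop: Cartan homo has infinite image, general case}, which asserts that the image of the Cartan homomorphism $K_0(X)\to G_0(X)$ --- a map that factors through $K_0(\mathscr{T}^c)$ because $\pi_*\circ\pi^*=\mathrm{id}$ --- is not finitely generated (this is exactly the ``apparatus'' you defer to). One small repair for the tilting clause: the classes of the indecomposable summands of $L$ need not generate $K_0(\rD^b(\Lambda\text{-mod}))$ unless $\Lambda$ has finite global dimension (the Cartan matrix of an Artinian algebra need not be invertible over $\mathbb{Z}$), so argue instead, as the paper does, that $\Lambda=\mathrm{End}(L)$ is finite dimensional, hence Artinian with finitely many simple modules, and the composition-series argument shows $K_0$ is generated by the simples.
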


The proofs of both theorems depend on a careful study of various Grothendieck groups of $X$. In particular we will investigate the natural map $K_0(\rD^{\perf}(X))\to K_0(\rD^b(\coh(X)))$ and show that if $g\geq 1$ then the image is not finitely generated, of which Theorem \ref{thm: non-exist of full excep collection in the introduction} and \ref{thm: non-exist of f.d algebra as a cat resolution in the introduction} will be a direct consequence.

\section{Some generalities on K-theory and the Picard group}
In this section we quickly review the K-theory and the Picard group of schemes. For reference see \cite{weibel2013k} Chapter II.

Let $\mathcal{A}$ be an abelian category (or more generally an exact category). The Grothendieck group $K_0(\mathcal{A})$ is defined as an abelian group with generators $[A]$ for each isomorphism class of objects $A$ in $\mathcal{A}$ and subjects to the relation that
$$
[A_2]=[A_1]+[A_3]
$$
for any short exact sequence $0\to A_1\to A_2\to A_3\to 0$ in $\mathcal{A}$.

Similarly let $\mathcal{C}$ be a triangulated category. The Grothendieck group $K_0(\mathcal{C})$ is defined as an abelian group with generators $[C]$ for each isomorphism class of objects $C$ in $\mathcal{C}$ and subjects to the relation that
$$
[C_2]=[C_1]+[C_3]
$$
for any exact triangle $C_1\to C_2\to C_3\to C_1[1]$ in $\mathcal{C}$.

\begin{prop}\label{prop: Grothen group of full excep coll}
If a triangulated category $\mathcal{C}$ has a full exceptional collection $\{A_1\ldots A_n\}$, then the Grothendieck group of $\mathcal{C}$, $\mathrm{K}_0(\mathcal{C})$, is isomorphic to $\mathbb{Z}^n$.
\end{prop}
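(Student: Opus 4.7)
The plan is to prove both that $[A_1],\ldots,[A_n]$ generate $K_0(\mathcal{C})$ and that they are linearly independent over $\mathbb{Z}$, giving the isomorphism $K_0(\mathcal{C})\cong\mathbb{Z}^n$ via $[A_i]\mapsto e_i$.

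For generation, I would use condition (3) of Definition \ref{defi:full excep coll}. Let $\mathcal{C}'\subset \mathcal{C}$ be the smallest triangulated subcategory containing the $A_i$; by assumption $\mathcal{C}'=\mathcal{C}$. I would then argue that the image of $K_0(\mathcal{C}')\to K_0(\mathcal{C})$ is generated by the classes $[A_i]$. This is a standard d\'evissage: every object of $\mathcal{C}'$ is obtained from the $A_i$ by finitely many operations of taking cones, shifts, and direct summands. In the Grothendieck group, cones give $[C_2]=[C_1]+[C_3]$ from any triangle $C_1\to C_2\to C_3\to C_1[1]$, shifts give $[A[1]]=-[A]$ (from the triangle $A\to 0\to A[1]\to A[1]$), and summands are handled by $[A\oplus B]=[A]+[B]$. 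Thus every class in $K_0(\mathcal{C})$ is a $\mathbb{Z}$-linear combination of the $[A_i]$.

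For linear independence, I would use the Euler pairing
$$\chi(A,B)=\sum_{l\in\mathbb{Z}}(-1)^l\dim_k\Hom_{\mathcal{C}}(A,B[l]),$$
which (under the implicit finiteness assumption that the sum is finite) descends to a well-defined bilinear form $\chi\colon K_0(\mathcal{C})\times K_0(\mathcal{C})\to\mathbb{Z}$. Conditions (1) and (2) of Definition \ref{defi:full excep coll} imply
$$\chi(A_i,A_i)=1,\qquad \chi(A_j,A_i)=0\ \text{for } i<j,$$
so the Gram matrix $M=(\chi(A_j,A_i))_{i,j}$ is upper triangular with $1$'s on the diagonal, hence $\det M=1$. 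Given a relation $\sum c_i[A_i]=0$ in $K_0(\mathcal{C})$, pairing on the left with each $[A_j]$ yields a linear system with invertible matrix $M$, forcing all $c_i=0$.

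The main technical point — and the only place where care is required — is justifying that the Euler form $\chi$ is well-defined on $K_0(\mathcal{C})$, i.e.\ that $\Hom_{\mathcal{C}}(A,B[l])$ is finite-dimensional and vanishes for $|l|\gg 0$ whenever $A,B$ are objects built from the $A_i$. For the $A_i$ themselves this follows from the axioms of an exceptional collection; the extension to the whole of $\mathcal{C}$ then proceeds by induction on triangles, since finiteness and eventual vanishing are preserved under cones and shifts. Once this is in place, the generation and independence arguments combine to give the required isomorphism $K_0(\mathcal{C})\cong\mathbb{Z}^n$.
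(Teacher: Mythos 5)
The paper offers no argument here (its proof reads ``immediate consequence of the definition''), so the comparison is really with the standard argument. Your generation step is correct: by axiom (3) of Definition \ref{defi:full excep coll} every object of $\mathcal{C}$ is obtained from the $A_i$ by finitely many shifts and cones, and since $[C_2]=[C_1]+[C_3]$ for every exact triangle and $[A[1]]=-[A]$, the classes $[A_1],\dots,[A_n]$ generate $K_0(\mathcal{C})$. (The remark about direct summands is not needed: axiom (3) asks for the smallest triangulated subcategory, not the smallest thick one.)

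The gap is in the independence step, precisely at the point you single out and then dismiss too quickly. You assert that well-definedness of $\chi$ ``for the $A_i$ themselves follows from the axioms of an exceptional collection.'' It does not: axiom (1) controls $\Hom_{\mathcal{C}}(A_i,A_i[l])$ and axiom (2) controls $\Hom_{\mathcal{C}}(A_j,A_i[l])$ for $j>i$, but nothing in Definition \ref{defi:full excep coll} constrains $\Hom_{\mathcal{C}}(A_i,A_j[l])$ for $i<j$; these groups may be infinite-dimensional, or nonzero for infinitely many $l$, in which case $\chi(A_j,-)$ is simply not a well-defined homomorphism on $K_0(\mathcal{C})$ and the unitriangular Gram-matrix argument cannot be run. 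Two repairs are available. (a) Add the hypothesis that $\mathcal{C}$ is $\mathrm{Ext}$-finite, i.e.\ $\bigoplus_l \Hom_{\mathcal{C}}(A,B[l])$ is finite-dimensional for all objects $A,B$; this holds in every category the paper actually applies the proposition to ($\rD^b(\coh(X))$ for $X$ projective over $k$, $\rD^b(\Lambda\text{-mod})$ for $\Lambda$ finite-dimensional), and then your induction on triangles and the determinant-one Gram matrix do the job. (b) Avoid the Euler form altogether: the collection gives a semiorthogonal decomposition $\mathcal{C}=\langle\mathcal{A}_1,\dots,\mathcal{A}_n\rangle$ with $\mathcal{A}_i=\langle A_i\rangle\simeq\rD^b(k)$, and $K_0$ is additive over semiorthogonal decompositions --- surjectivity of $\bigoplus_i K_0(\mathcal{A}_i)\to K_0(\mathcal{C})$ comes from the filtration of an arbitrary object with factors in the $\mathcal{A}_i$, injectivity from the projection functors inducing a splitting on $K_0$ --- which yields $K_0(\mathcal{C})\cong\mathbb{Z}^n$ with no finiteness assumption beyond the axioms.
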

\begin{proof}
It is an immediate consequence of Definition \ref{defi:full excep coll}.
\end{proof}

\begin{defi}\label{defi: groth gps of perf and coh}
Let $X$ be an Noetherian scheme, follow the standard notation (see for example \cite{srinivas1996algebraic} Section 5.6 or \cite{weibel2013k} Chapter II) we denote the Grothendieck group of $\rD^{\perf}(X)$ by $K_0(X)$ and the Grothendieck group of $\rD^b(\coh(X))$ by $G_0(X)$.

Notice that in some literatures, say \cite{berthelot1966seminaire} Expos\'{e} IV or \cite{manin1969lectures}, $K_0(X)$ is denoted by $K^0(X)$ and $G_0(X)$ is denoted by $K_0(X)$. Nevertheless in this paper we will use the previous  notation.
\end{defi}

\begin{rmk}\label{rmk: K naive and K}
In the literature people also define $K^{\text{na\"{i}ve}}_0(X)$ to be the Grothendieck group of the exact category $VB(X)$ and $G^{\text{na\"{i}ve}}_0(X)$ to be the Grothendieck group of the abelian category $\coh(X)$.

Nevertheless $G^{\text{na\"{i}ve}}_0(X)$ is isomorphic to $G_0(X)$ for any Noetherian scheme $X$ (\cite{berthelot1966seminaire}, Expos\'{e} IV, 2.4) and $K^{\text{na\"{i}ve}}_0(X)$ is isomorphic to $K_0(X)$ for any quasi-projective scheme $X$ (\cite{berthelot1966seminaire}, Expos\'{e} IV, 2.9). Since we always work with quasi-projective schemes in this paper, we can identify $G^{\text{na\"{i}ve}}_0(X)$ and $G_0(X)$ as well as $K^{\text{na\"{i}ve}}_0(X)$ and $K_0(X)$.
\end{rmk}

\begin{defi}\label{defi: cartan homomorphism}
Let $X$ be a Noetherian scheme. The inclusion $\rD^{\perf}(X)\hookrightarrow \rD^b(\coh(X))$  gives a group homomorphism
$$
c: K_0(X)\to G_0(X)
$$
which is called the Cartan homomorphism.
\end{defi}

\begin{prop}\label{prop: Cartan map is a module map}
For a Noetherian scheme $X$, the tensor product gives $K_0(X)$   a ring structure and $G_0(X)$   a   $K_0(X)$-module structure. Moreover, the Cartan homomorphism $c: K_0(X)\to G_0(X)$ is a  morphism of $K_0(X)$-modules.
\end{prop}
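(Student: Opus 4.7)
The plan is to define the ring structure on $K_0(X)$ and the $K_0(X)$-module structure on $G_0(X)$ through the derived tensor product $\otimes^{\mathrm{L}}_{\mathcal{O}_X}$, and then to check all the required relations on representatives. Throughout I will use the fact, recalled in Remark \ref{rmk: K naive and K}, that since $X$ is quasi-projective we may represent any class in $K_0(X)$ by a bounded complex of locally free sheaves (or even a single vector bundle class under the na\"{i}ve description) and any class in $G_0(X)$ by a single coherent sheaf. This will make the verifications transparent.

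First I would define a pairing on generators by $[E]\cdot [F]:=[E\otimes^{\mathrm{L}}_{\mathcal{O}_X}F]$. For $E,F$ perfect the derived tensor product is again perfect, and for $E$ perfect and $F\in\rD^b(\coh(X))$ it lies in $\rD^b(\coh(X))$: this uses the fact that a perfect complex is locally quasi-isomorphic to a bounded complex of finite rank locally free sheaves, hence has finite Tor-dimension, so tensoring with it preserves both coherence and boundedness. To see that the pairing descends to K-theory, the crucial observation is that for a fixed perfect complex $E$ the functor $E\otimes^{\mathrm{L}}_{\mathcal{O}_X}(-)$ is an exact functor of triangulated categories from $\rD^{\perf}(X)$ to itself and from $\rD^b(\coh(X))$ to itself; thus it sends distinguished triangles to distinguished triangles, which is precisely the relation defining $K_0$ and $G_0$. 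Bilinearity in the other slot follows symmetrically. Associativity and commutativity are then inherited from the corresponding properties of $\otimes^{\mathrm{L}}$ together with unitality $[\mathcal{O}_X]\cdot [F]=[F]$, endowing $K_0(X)$ with a commutative ring structure and $G_0(X)$ with a $K_0(X)$-module structure.

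Finally the Cartan homomorphism $c:K_0(X)\to G_0(X)$ is simply the map induced by the inclusion $\rD^{\perf}(X)\hookrightarrow \rD^b(\coh(X))$, so on representatives it is the identity $[E]\mapsto [E]$. Compatibility with the module structure then reduces to checking
\[
c([E]\cdot [F])=[E\otimes^{\mathrm{L}}_{\mathcal{O}_X}F]=[E]\cdot c([F])
\]
for $[E]\in K_0(X)$ and $[F]\in K_0(X)$, which is tautological since both sides are computed by the same derived tensor product.

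The only genuine subtlety, and where I would spend the most care, is verifying that $E\otimes^{\mathrm{L}}_{\mathcal{O}_X}(-)$ really is an exact functor on $\rD^b(\coh(X))$ when $E$ is perfect; this is where the finite Tor-dimension property of perfect complexes is indispensable, and it is precisely the reason one must work with the derived rather than the underived tensor product. Everything else is essentially a formal verification using bilinearity, associativity, and the defining relations of the Grothendieck groups.
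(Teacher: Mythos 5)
Your proposal is correct, and it fills in precisely the standard argument that the paper itself does not write out: the paper's ``proof'' of Proposition \ref{prop: Cartan map is a module map} is only a citation to \cite{manin1969lectures}, 1.5 and 1.6. Your route via the derived tensor product, using that a perfect complex has finite Tor-dimension so that $E\otimes^{\mathrm{L}}_{\mathcal{O}_X}(-)$ is an exact endofunctor of both $\rD^{\perf}(X)$ and $\rD^b(\coh(X))$ and hence descends to the Grothendieck groups, is exactly the intended argument, and the observation that the Cartan homomorphism is induced by the inclusion $\rD^{\perf}(X)\hookrightarrow \rD^b(\coh(X))$ makes the module compatibility tautological, as you say.
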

\begin{proof}
See \cite{manin1969lectures} 1.5 and 1.6.
\end{proof}

\begin{prop}\label{prop: K=G for regular schemes}
If $X$ is a regular Noetherian scheme, then the Cartan homomorphism is an isomorphism, i.e. we have
$$
c: K_0(X)\overset{\cong}{\to} G_0(X)
$$
\end{prop}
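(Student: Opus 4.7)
The plan is to construct an explicit inverse to the Cartan homomorphism using finite locally free resolutions. Since we work with quasi-projective schemes, by Remark \ref{rmk: K naive and K} we may identify $K_0(X)$ with the Grothendieck group of the exact category $VB(X)$ of vector bundles and $G_0(X)$ with the Grothendieck group of the abelian category $\coh(X)$. Under these identifications the Cartan homomorphism is induced by the inclusion functor $VB(X)\hookrightarrow \coh(X)$, and it suffices to produce an inverse at this level.

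The key geometric input is that on a regular Noetherian quasi-projective scheme every coherent sheaf $\mathcal{F}$ admits a finite resolution
$$
0\to E_n\to E_{n-1}\to \cdots \to E_0\to \mathcal{F}\to 0
$$
by locally free sheaves $E_i$ of finite rank. Existence of surjections from vector bundles comes from Serre's theorem for quasi-projective schemes, while bounded length follows from regularity: each local ring $\mathcal{O}_{X,x}$ has finite global dimension by Auslander--Buchsbaum--Serre, so the $n$-th syzygy is automatically locally free once $n\geq \dim X$.

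Using this, I would define a candidate inverse $\phi: G_0(X)\to K_0(X)$ on generators by
$$
\phi([\mathcal{F}])=\sum_{i=0}^{n}(-1)^i [E_i],
$$
choosing any finite locally free resolution. The main step is verifying that $\phi$ is well-defined. Independence of the choice of resolution is the classical comparison argument, in which any two bounded locally free resolutions of $\mathcal{F}$ can be dominated by a common one, so that the alternating sums agree modulo split short exact sequences. Additivity on short exact sequences $0\to \mathcal{F}_1\to \mathcal{F}_2\to \mathcal{F}_3\to 0$ is handled by the horseshoe lemma, which lifts the sequence to a short exact sequence of finite locally free resolutions and gives $\phi([\mathcal{F}_2])=\phi([\mathcal{F}_1])+\phi([\mathcal{F}_3])$ directly.

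Once $\phi$ is well-defined, the identity $\phi\circ c=\mathrm{id}$ is immediate since a vector bundle is its own resolution of length $0$. Conversely, $c\circ \phi=\mathrm{id}$ follows by splitting a finite locally free resolution into short exact sequences and observing that the identity $[\mathcal{F}]=\sum (-1)^i [E_i]$ already holds in $G_0(X)$. The main obstacle will be the well-definedness of $\phi$, which is where the horseshoe lemma and the comparison of bounded resolutions must be invoked carefully; all remaining steps are essentially formal once finite locally free resolutions are available globally.
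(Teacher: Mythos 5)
Your argument is correct and is precisely the standard ``resolution theorem'' proof that the paper outsources to its citation of \cite{weibel2013k} Chapter II Theorem 8.2: identify $K_0$ and $G_0$ with the na\"{i}ve Grothendieck groups, use regularity plus the existence of enough vector bundles on a quasi-projective scheme to produce finite locally free resolutions, and invert the Cartan map by the alternating sum, with well-definedness handled by the comparison of resolutions and the horseshoe lemma. The only caveat is that the surjection-from-a-vector-bundle step genuinely uses quasi-projectivity (an ample line bundle) rather than regularity alone, but since the paper explicitly restricts to quasi-projective schemes in Remark \ref{rmk: K naive and K}, this is consistent with the paper's setting.
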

\begin{proof}
See \cite{weibel2013k} Chapter II Theorem 8.2.
\end{proof}

Smooth schemes are regular hence the Cartan homomorphism is an isomorphism for any smooth scheme.

\begin{rmk}\label{rmk: Cartan is not an isom in general}
For general $X$ the Cartan homomorphism is not an isomorphism, actually it is not even injective in general.
\end{rmk}

Next we talk about the functorial properties of $K_0$ and $G_0$, which  are more involved. First we have the following definition.

\begin{defi}\label{defi: pull back of K_0 and G_0}
Let $f: X\to Y$ be a morphism of schemes, then the derived pull-back $Lf^*$ functor induces the map
$$
f^*: K_0(Y)\to K_0(X).
$$
See \cite{berthelot1966seminaire} Expos\'{e} IV, 2.7.

If $f: X\to Y$ is a flat morphism between Noetherian schemes, or more generally $f$ is of finite Tor-dimension. Then $Lf^*$ is a functor $D^b(\coh(Y))\to D^b(\coh(X))$ and induces the map
$$
f^*: G_0(Y)\to G_0(X).
$$
See \cite{berthelot1966seminaire} Expos\'{e} IV, 2.12.
\end{defi}

We can also define the push-forward map for $G_0(-)$ for proper morphisms.

\begin{defi}\label{defi: push forward for G_0}
Let $f: X\to Y$ be a proper morphism of Noetherian schemes, then the derived push-forward functor $Rf_*$ induces the map
$$
f_*: G_0(X)\to G_0(Y).
$$
\end{defi}

We will also need some results on the relationship between the Grothendieck group and the Picard group. Let $\Pic(X)$ denote the Picard group of $X$ and we have the following proposition.

\begin{prop}\label{prop: K_0 to Pic det}
There is a determinant map
$$
\det: K_0(X)\to \Pic(X)
$$
which is a surjective group homomorphism. Moreover, the determinant map commutes with the restriction map, i.e. we have the following commutative diagram
$$
\begin{CD}
K_0(X) @>\det>> \Pic(X)\\
@VVrV @VVrV\\
K_0(U) @>\det>> \Pic(U)
\end{CD}
$$
\end{prop}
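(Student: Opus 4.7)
The plan is to construct the determinant on the vector bundle level and then transport it to $K_0(X)$ via the isomorphism $K_0^{\text{na\"ive}}(X)\cong K_0(X)$ (valid because we restrict to quasi-projective schemes, cf.\ Remark \ref{rmk: K naive and K}). Concretely, for a vector bundle $E$ on $X$ define $\det E = \bigwedge^{\mathrm{rk}\,E} E$, understanding the rank as a locally constant function, so that on a connected component of rank $r$ we take $\bigwedge^r E$. This produces a line bundle and hence a class in $\Pic(X)$.

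Next I would check additivity. Given a short exact sequence $0\to E_1\to E_2\to E_3\to 0$ of vector bundles, the standard canonical isomorphism $\det E_2\cong \det E_1\otimes \det E_3$ (built from the filtration $E_1\subset E_2$ using wedge products) gives $\det[E_2]=\det[E_1]\cdot \det[E_3]$ in $\Pic(X)$. Thus the assignment $[E]\mapsto [\det E]$ descends to a well-defined group homomorphism $K_0^{\text{na\"ive}}(X)\to \Pic(X)$, which by Remark \ref{rmk: K naive and K} we identify with a homomorphism $\det: K_0(X)\to \Pic(X)$.

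Surjectivity is immediate: any line bundle $L$ has rank $1$, so $\det[L]=L$, and every element of $\Pic(X)$ lies in the image. For the compatibility with restriction to an open subscheme $U\overset{j}{\hookrightarrow}X$, the map $r:K_0(X)\to K_0(U)$ is induced by the (exact) restriction functor $j^*$, and the map $r:\Pic(X)\to \Pic(U)$ is likewise $j^*$. Since the formation of exterior powers commutes with pullback of locally free sheaves, i.e.\ $j^*(\bigwedge^r E)\cong \bigwedge^r(j^*E)$, we get $\det(j^*E)\cong j^*(\det E)$, which is exactly the commutativity of the claimed square.

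The only potential obstacle is that the stated map is defined on $K_0(X)$, which is the Grothendieck group of perfect complexes rather than vector bundles; one could worry about extending $\det$ directly to perfect complexes via the graded alternating tensor product $\bigotimes_i(\det P^i)^{\otimes(-1)^i}$. However, this subtlety is bypassed by Remark \ref{rmk: K naive and K}, since for the quasi-projective schemes under consideration every class in $K_0(X)$ is represented by a finite complex of vector bundles, so the vector-bundle construction above suffices.
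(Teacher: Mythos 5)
Your proposal is correct and follows essentially the same route as the paper: define $\det$ on vector bundles as the top exterior power, verify additivity on short exact sequences, and deduce surjectivity and compatibility with restriction from the construction. The only difference is that you spell out the passage from $K_0^{\text{na\"ive}}(X)$ to $K_0(X)$ via Remark \ref{rmk: K naive and K} explicitly, which the paper leaves implicit; this is a welcome clarification rather than a divergence.
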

\begin{proof}
For an $n$-dimensional vector bundle $\mathcal{E}$ we can take its determinant line bundle, i.e. the top exterior power $\wedge^n \mathcal{E}$ and we call it $\det(\mathcal{E})$. Moreover, for a short exact sequence of vector bundles $0\to \mathcal{E}\to \mathcal{F}\to \mathcal{G}\to 0$ we have $\det(\mathcal{F})\cong \det(\mathcal{E})\otimes \det(\mathcal{G})$ hence we get a well-defined group homomorphism $\det: K_0(X)\to \Pic(X)$.

The above diagram commutes because the construction of the determinant map is natural. The surjectivity of   det also comes from the construction since we could pick $\mathcal{E}$ to be any line bundle and hence $\det(\mathcal{E})=\mathcal{E}$.
\end{proof}

\section{The irreducible and reduced case of the main theorem}\label{section: the reduced case}
To illustrate the idea,  we focus on the case that $X$ is an irreducible, reduced, projective curve over $k$ in this section.

In this case let $p: \widetilde{X}\to X$ be a (geometric) resolution of singularity and we  can obtain more information  on $\Pic(\widetilde{X})$. First we have

\begin{thm}[\cite{liu2002algebraic} Corollary 7.4.41]\label{thm: picard group of curves genus >1}
Let $\widetilde{X}$ be a smooth, connected, projective curve over an algebraically closed field $k$, of genus $g$. Let $\Pic^0(\widetilde{X})$ denote the subgroup of $\Pic(\widetilde{X})$ consisting of divisors of degree $0$. Let $n\in \mathbb{Z}$ be non-zero and $\Pic^0(\widetilde{X})[n]$ denote the kernel of the multiplication by $n$ map.
\begin{enumerate}
\item If $(n,\text{char} (k))=1$, then $\Pic^0(\widetilde{X})[n]\cong (\mathbb{Z}/n\mathbb{Z})^{2g}$;
\item If $p=\text{char} (k)>0$, then there exists an $0\leq h\leq g$ such that for any $n=p^m$, we have $\Pic^0(\widetilde{X})[n]=(\mathbb{Z}/n\mathbb{Z})^h$.
\end{enumerate}
\end{thm}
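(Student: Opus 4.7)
The plan is to reduce the statement to a computation of torsion in an abelian variety, namely the Jacobian $J = J(\widetilde{X})$. First I would recall that for a smooth connected projective curve of genus $g$ over an algebraically closed field $k$, the functor of degree-zero line bundles is represented by $J$, an abelian variety of dimension $g$, and $\Pic^0(\widetilde{X}) = J(k)$. After this identification, the theorem becomes a statement about the $n$-torsion in $J(k)$, so one is really proving a general fact about abelian varieties.

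Next I would analyze the multiplication-by-$n$ map $[n] : J \to J$. This is an isogeny of degree $n^{2g}$; the cleanest way to see this is to use that $[n]^* L \cong L^{\otimes n^2}$ for a symmetric ample line bundle $L$ and compare self-intersection numbers, or equivalently to invoke that the degree of $[n]$ on a $g$-dimensional abelian variety equals $n^{2g}$. The key dichotomy is whether $[n]$ is separable or not, which is exactly governed by whether $\gcd(n, \mathrm{char}(k)) = 1$.

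For part (1), assume $(n, \mathrm{char}(k)) = 1$. Then $[n]$ is étale, so the group scheme kernel $J[n]$ is étale of order $n^{2g}$, and since $k$ is algebraically closed, $J[n](k)$ is a finite abelian group of order $n^{2g}$ which is annihilated by $n$. To upgrade this to the claim that $J[n](k) \cong (\mathbb{Z}/n\mathbb{Z})^{2g}$, I would argue prime-by-prime: for each prime $\ell \mid n$ with $\ell \ne \mathrm{char}(k)$, the $\ell$-torsion $J[\ell](k)$ is an $\mathbb{F}_\ell$-vector space of order $\ell^{2g}$, hence is $(\mathbb{F}_\ell)^{2g}$; combined with the exact sequences $0 \to J[\ell^{i}] \to J[\ell^{i+1}] \xrightarrow{[\ell]} J[\ell^{i}] \to 0$ and an induction, one gets $J[\ell^m](k) \cong (\mathbb{Z}/\ell^m\mathbb{Z})^{2g}$, and then CRT assembles these into the full claim for $n$.

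For part (2), assume $p = \mathrm{char}(k) > 0$ and $n = p^m$. Here $[p]$ is inseparable; I would define the $p$-rank $h$ by $|J[p](k)| = p^h$, so that $0 \leq h \leq g$ (the upper bound $g$ comes from the comparison with the $p$-torsion of the formal group, or equivalently from the fact that the étale part of the $p$-divisible group $J[p^\infty]$ has height at most $g$, while the total height is $2g$). The same short exact sequence argument $0 \to J[p] \to J[p^{m}] \xrightarrow{[p]} J[p^{m-1}] \to 0$ (on $k$-points, which is exact because $k$ is algebraically closed and $[p]$ is surjective on $J$) gives $|J[p^m](k)| = p^{mh}$ by induction, and the structure $(\mathbb{Z}/p^m\mathbb{Z})^h$ follows since $J[p^m](k)$ is the group of $k$-points of the étale quotient $J[p^m]^{\mathrm{\acute{e}t}}$ of $J[p^m]$, which is the Cartier dual of a connected $p$-divisible group of height $h$ and therefore corresponds to a free $\mathbb{Z}_p$-module of rank $h$ in its Tate module.

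The main obstacle I would expect is part (2): proving the precise abstract group structure in characteristic $p$ really requires the theory of $p$-divisible groups (or Dieudonn\'e modules) to separate the étale and local parts of $J[p^m]$ and to show the étale part is exactly $(\mathbb{Z}/p^m\mathbb{Z})^h$ rather than some group of the same order. In contrast, part (1) is essentially formal once one has the Jacobian and the degree of $[n]$. Since the statement is classical and cited from Liu, the actual paper presumably just quotes it; the outline above is what a self-contained proof would look like.
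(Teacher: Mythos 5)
Your outline is correct and follows the same route as the cited source: the paper gives no proof of this statement, simply quoting Liu, Corollary 7.4.41, whose proof is exactly the reduction to $n$-torsion of the Jacobian and the (in)separability of $[n]$ that you describe. One remark: the ``main obstacle'' you flag in part (2) is not actually an obstacle, since once you know $|J[p](k)|=p^h$ and that $[p]$ is surjective on $k$-points, the group $J[p^m](k)$ is a finite abelian group killed by $p^m$, of order $p^{mh}$, whose $p$-torsion subgroup has $\mathbb{F}_p$-dimension $h$; the structure theorem then forces it to be $(\mathbb{Z}/p^m\mathbb{Z})^h$ by the same counting you use in part (1), with no need for $p$-divisible groups or Dieudonn\'e theory beyond the bound $h\le g$.
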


\begin{coro}\label{coro: Picard is infinitely generated}
Let $\widetilde{X}$ be a smooth, connected, projective curve over an algebraically closed field $k$ of genus $g\geq 1$, then $\Pic^0(\widetilde{X})$ and hence $\Pic(\widetilde{X})$ are not finitely generated as an abelian group. Moreover, for any non-zero integer $n$, $n\Pic(\widetilde{X})$ is not finitely generated.
\end{coro}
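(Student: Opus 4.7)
The plan is to show that the torsion subgroup of $\Pic^0(\widetilde{X})$ is already infinite. Since a finitely generated abelian group has only finite torsion (structure theorem), this forces $\Pic^0(\widetilde{X})$ to be non-finitely-generated; and because every subgroup of a finitely generated abelian group is finitely generated, it follows that $\Pic(\widetilde{X})$ is not finitely generated either.

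To produce infinite torsion, I would invoke Theorem \ref{thm: picard group of curves genus >1}. The most uniform choice is to pick a prime $\ell$ different from $\text{char}(k)$ (in characteristic zero the coprimality condition is vacuous, so any prime is permitted). Part (1) of the theorem then gives $\Pic^0(\widetilde{X})[\ell] \cong (\mathbb{Z}/\ell\mathbb{Z})^{2g}$, which is non-trivial because $g\geq 1$. Letting $\ell$ range over the infinitely many primes distinct from $\text{char}(k)$ exhibits elements of arbitrarily large prime order in $\Pic^0(\widetilde{X})$, so its torsion subgroup is infinite. (Note that part (2) of the theorem would only be needed if one insisted on using $p$-power torsion; since we always have many primes coprime to $\text{char}(k)$, part (1) suffices.)

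For the moreover part, I would use the fact that $\Pic^0(\widetilde{X})$ is the group of $k$-points of the Jacobian, which is an abelian variety over the algebraically closed field $k$, and is therefore a divisible abelian group. Consequently, for any nonzero integer $n$, multiplication by $n$ is surjective on $\Pic^0(\widetilde{X})$, so
\[
\Pic^0(\widetilde{X}) \;=\; n\cdot \Pic^0(\widetilde{X}) \;\subset\; n\cdot \Pic(\widetilde{X}).
\]
Since $\Pic^0(\widetilde{X})$ has just been shown to be non-finitely-generated, the group $n\cdot \Pic(\widetilde{X})$ contains a non-finitely-generated subgroup, hence is itself not finitely generated.

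The only real obstacle is the moreover clause: one might worry that multiplying by $n$ could collapse the infinite torsion that made $\Pic^0(\widetilde{X})$ non-finitely-generated. The divisibility of an abelian variety over an algebraically closed field is the clean input that rules this out, and recognizing that this is the property one needs is the main conceptual step.
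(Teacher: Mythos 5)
Your proof is correct, and the first half is exactly the intended ``immediate consequence'' of Theorem \ref{thm: picard group of curves genus >1}: infinitely many primes $\ell$ coprime to $\mathrm{char}(k)$ give nontrivial $\ell$-torsion $(\mathbb{Z}/\ell\mathbb{Z})^{2g}$, so the torsion subgroup of $\Pic^0(\widetilde{X})$ is infinite and no finitely generated abelian group can contain it. For the moreover clause you import the divisibility of the Jacobian over an algebraically closed field, which is a clean and standard input but strictly more than Theorem \ref{thm: picard group of curves genus >1} provides; the same conclusion follows from the theorem alone by observing that for any prime $\ell$ coprime to both $n$ and $\mathrm{char}(k)$, multiplication by $n$ is a bijection on $\Pic^0(\widetilde{X})[\ell]\cong(\mathbb{Z}/\ell\mathbb{Z})^{2g}$, so $n\Pic(\widetilde{X})$ already contains all of these torsion subgroups and is therefore not finitely generated. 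Either route is fine; yours correctly anticipates (and defuses) the worry that multiplication by $n$ might kill the torsion that witnesses non-finite-generation.
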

\begin{proof}
It is  an immediate consequence of  Theorem \ref{thm: picard group of curves genus >1}.
\end{proof}

\begin{rmk}\label{rmk: picard group non-algebraic closed case}
If the base field $k$ is not algebraically closed, then $\Pic^0(\widetilde{X})$ may be finitely generated. For example if $k=\mathbb{Q}$ and $\widetilde{X}$ is a smooth elliptic curve, then by Mordell theorem, $\Pic^0(\widetilde{X})$ is a finitely generated abelian group.
\end{rmk}

Let $Z$ be the closed subset consisting of singular points of $X$ and $U=X-Z$. Since $p: \widetilde{X}\to X$ is a resolution of singularity,  the restriction of $p$
$$
p|_{p^{-1}(U)}: p^{-1}(U)\overset{\cong}{\to} U
$$
is an isomorphism.

We want to understand the picard group of $U$. In fact we have the following result

\begin{lemma}\label{lemma: Picard gp of U is infi generated}
Let $\widetilde{X}$ be a smooth and connected projective curve with genus $g\geq 1$ over an algebraically closed field $k$. Let $U$ be a non-empty open subset of $\widetilde{X}$. Then $\Pic(U)$ is not finitely generated. Moreover, for any non-zero integer $n$, $n\Pic(U)$ is not finitely generated.
\end{lemma}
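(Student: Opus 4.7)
The natural tool is the standard excision (localization) exact sequence for Picard groups on a smooth curve. Write $Z = \widetilde{X}\setminus U$, a finite (possibly empty) set of closed points. If $Z$ is empty then $U=\widetilde{X}$ and both assertions reduce immediately to Corollary \ref{coro: Picard is infinitely generated}, so assume $Z\neq\emptyset$. Then the localization sequence for divisor class groups on the smooth curve $\widetilde{X}$ gives an exact sequence
$$
\mathbb{Z}^{|Z|} \longrightarrow \Pic(\widetilde{X}) \longrightarrow \Pic(U) \longrightarrow 0,
$$
where the left map sends $p\in Z$ to the class of the prime divisor $[p]$. Call its image $K$, which is a finitely generated subgroup of $\Pic(\widetilde{X})$. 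For the first assertion, note that if $\Pic(U)\cong \Pic(\widetilde{X})/K$ were finitely generated, then $\Pic(\widetilde{X})$ would be an extension of a finitely generated group by a finitely generated group, hence finitely generated, contradicting Corollary \ref{coro: Picard is infinitely generated}.

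For the refined statement about $n\Pic(U)$, the idea is that $\Pic^{0}(\widetilde{X})$ contains enough torsion prime to $n$ to force lots of elements to lie in the image of multiplication by $n$. More precisely, I would pick any integer $m$ coprime to both $n$ and $\mathrm{char}(k)$; Theorem \ref{thm: picard group of curves genus >1}(1) gives $\Pic^{0}(\widetilde{X})[m]\cong(\mathbb{Z}/m)^{2g}$, and since $\gcd(n,m)=1$, multiplication by $n$ is a bijection on $\Pic^{0}(\widetilde{X})[m]$, so every $m$-torsion class lies in $n\Pic(\widetilde{X})$. Taking the union
$$
T := \bigcup_{\gcd(m,\,n\cdot\mathrm{char}(k))=1}\Pic^{0}(\widetilde{X})[m]
$$
yields a torsion subgroup $T\subset n\Pic(\widetilde{X})$ whose image therefore lies in $n\Pic(U)$.

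To finish, I would observe that since $K$ is finitely generated its torsion subgroup $T\cap K$ is finite, so the image of $T$ in $\Pic(U)$ is $T$ modulo a finite group. If $n\Pic(U)$ were finitely generated, this image would be a finitely generated torsion abelian group, hence finite; but $T$ is infinite (it already contains copies of $(\mathbb{Z}/m)^{2g}$ for infinitely many $m$), contradiction. Hence $n\Pic(U)$ is not finitely generated.

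The main obstacle I anticipate is the step of transferring non-finite-generation from $\widetilde{X}$ to $U$ after multiplication by $n$: one has to ensure that enough of the large torsion in $\Pic^{0}(\widetilde{X})$ survives modulo $K$, and that every such torsion element is actually divisible by $n$ in $\Pic(\widetilde{X})$. The torsion-and-coprimality argument above handles both issues cleanly using only Theorem \ref{thm: picard group of curves genus >1}; a conceptually shorter variant would invoke divisibility of $\Pic^{0}(\widetilde{X})$ (as the $k$-points of the Jacobian abelian variety over the algebraically closed field $k$) to get $n\Pic^{0}(\widetilde{X})=\Pic^{0}(\widetilde{X})$ directly, but I prefer the elementary route since it relies only on results already recorded in the paper.
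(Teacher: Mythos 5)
Your proposal is correct and follows essentially the same route as the paper: both use the localization sequence $\mathbb{Z}^{|Z|}\to\Pic(\widetilde{X})\to\Pic(U)\to 0$ to see that $\Pic(U)$ is a quotient of $\Pic(\widetilde{X})$ by a finitely generated subgroup, and then invoke the infinite torsion of $\Pic^{0}(\widetilde{X})$ from Theorem \ref{thm: picard group of curves genus >1}. Your explicit coprime-torsion argument for the $n\Pic(U)$ statement is just an unwinding of what the paper compresses into its citation of Corollary \ref{coro: Picard is infinitely generated}.
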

\begin{proof}
This is actually part of \cite{liu2002algebraic} Exercise 7.4.9. Thanks to Georges Elencwajg for helping with the proof. Actually we can write $U=X\backslash \{p_1,\ldots ,p_l\}$. It follows that the kernel of the natural homomorphism $\Pic^0(X)\to \Pic(U)$ is   the subgroup of $\Pic^0(X)$ generated by $[p_i]-[p_j]$, hence is finitely generated. Then this lemma is a consequence of Corollary \ref{coro: Picard is infinitely generated}.
\end{proof}

It is also necessary to know the relation between the Picard group of a non-smooth curve $X$ and its non-empty subscheme $U$, which is given in the following lemma.

\begin{lemma}\label{lemma: ext of line bundles on singular curves}
Let $X$ be a (not necessarily smooth) curve over an algebraically closed field $k$. Let  $U$ be an open subscheme of $X$.

Let $\mathcal{L}$ be a line bundle on $U$. Then we can always extend $\mathcal{L}$ to a line bundle on $X$. As a result, the restriction map of the Picard groups
$$
r: \Pic(X)\to \Pic(U)
$$
is surjective
\end{lemma}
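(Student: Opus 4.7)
The plan is to represent $\mathcal{L}$ as the invertible sheaf of a Cartier divisor on $U$ and then extend the divisor across each missing point by trivial local data. I focus on the case where $X$ is an irreducible reduced curve, which is the setting of this section. One may assume $X\setminus U=\{x_1,\dots,x_n\}$ is a finite set of closed points. In this irreducible reduced setting the sheaf of total fractions $\mathcal{K}_X$ is the constant sheaf with value the function field $K(X)$, so $\mathcal{L}\otimes_{\mathcal{O}_U}\mathcal{K}_U$ is a rank-one locally constant sheaf over the connected scheme $U$ and is therefore globally trivial. Picking a trivialization realizes $\mathcal{L}$ as $\mathcal{O}_U(D)$ for a Cartier divisor $D$ given by data $\{(V_\alpha,f_\alpha)\}_{\alpha=1}^m$, where $\{V_\alpha\}$ is a finite affine open cover of $U$ (finite by the quasi-compactness of $U$), the $f_\alpha$ are elements of $K(X)^{\ast}$ viewed as sections of $\mathcal{K}_X^{\ast}$ on $V_\alpha$, and $f_\alpha/f_\beta\in\mathcal{O}_X^{\ast}(V_\alpha\cap V_\beta)$.

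To extend $D$ to a Cartier divisor on $X$, I would pick pairwise disjoint affine open neighborhoods $W_i\subset X$ of the $x_i$ with $W_i\cap(X\setminus U)=\{x_i\}$ and adjoin to the Cartier data the trivial pieces $(W_i,1)$. The only new cocycle condition is that each $f_\alpha\in\mathcal{O}_X^{\ast}(V_\alpha\cap W_i)$ for every pair $(\alpha,i)$ with $V_\alpha\cap W_i\neq\emptyset$; that is, each such $f_\alpha$ must be both regular and invertible on $V_\alpha\cap W_i\subset W_i\setminus\{x_i\}$. This is where the one-dimensionality of $X$ enters decisively: each nonzero rational function $f_\alpha\in K(X)^{\ast}$ has only finitely many zeros and poles on $X$, so the union of these over the finitely many relevant $\alpha$ forms a finite ``bad set'' $B_i\subset W_i\setminus\{x_i\}$ of closed points distinct from $x_i$. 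By prime avoidance one obtains $g\in\mathcal{O}_X(W_i)$ vanishing on $B_i$ but not at $x_i$, and replaces $W_i$ by the smaller basic affine neighborhood $D(g)$, on which every relevant $f_\alpha$ is regular and invertible.

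With these shrunken $W_i$, the augmented data $\{(V_\alpha,f_\alpha)\}_\alpha\cup\{(W_i,1)\}_{i=1}^n$ satisfies the Cartier cocycle condition and so defines a Cartier divisor $\widetilde D$ on $X$ whose associated invertible sheaf $\mathcal{O}_X(\widetilde D)$ restricts to $\mathcal{O}_U(D)\cong\mathcal{L}$ on $U$. This produces the desired extension, and the surjectivity of the restriction map $r\colon\Pic(X)\to\Pic(U)$ follows immediately. The general (possibly reducible or non-reduced) case of the lemma follows by combining this argument with essentially the same reasoning applied one irreducible component at a time.

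The main obstacle is the simultaneous compatibility step: one must find a single affine shrinking $W_i$ of each puncture $x_i$ on which none of the $f_\alpha$ acquires a zero or a pole. This rests essentially on the one-dimensionality of $X$, since a nonzero rational function on a curve has only finitely many zeros and poles and the Cartier cover can be made finite by quasi-compactness. Once this shrinking is arranged, the rest of the argument is purely formal gluing of Cartier data.
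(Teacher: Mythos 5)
Your argument follows the same route as the paper's proof: the paper represents $\mathcal{L}$ by a Cartier divisor on $U$ (citing EGA IV, Proposition 21.3.4(a)) and then extends that divisor to $X$ (citing EGA IV, Proposition 21.9.4), and you carry out both steps by hand. For $X$ integral your execution is correct: $\mathcal{K}_U$ is the constant sheaf $K(X)$, so $\mathcal{L}\otimes\mathcal{K}_U$ is trivial and $\mathcal{L}\cong\mathcal{O}_U(D)$; the non-invertibility loci of the finitely many $f_\alpha$ are finite because $X$ is one-dimensional, so the punctured neighborhoods can be shrunk to make the augmented data a Cartier cocycle. As a self-contained proof of the integral case this is sound.

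The genuine gap is the final sentence. The lemma is stated for arbitrary curves and, more importantly, is applied in Section \ref{section: the general case} to the possibly \emph{reducible} reduced curve $X_{\red}$ (the surjectivity of $r:\Pic(X_{\red})\to\Pic(U_{\red})$ is what is invoked in the proof of Proposition \ref{prop: Cartan homo has infinite image, general case}). ``One irreducible component at a time'' is not a valid reduction: the restriction map $\Pic(X)\to\prod_i\Pic(C_i)$ to the components is in general neither injective nor surjective (there is a Mayer--Vietoris obstruction supported on the intersections $C_i\cap C_j$), so extending $\mathcal{L}|_{U\cap C_i}$ over each $C_i$ separately does not yield a line bundle on $X$ without an additional gluing argument at the points where components meet; and the non-reduced case is not reached by passing to components at all. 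Moreover, the first step of your argument genuinely uses integrality: for reducible or non-reduced $U$ the sheaf $\mathcal{K}_U$ is no longer a constant field, and the fact that every invertible sheaf is still represented by a Cartier divisor is not formal --- it needs the hypothesis that every finite set of points lies in an affine open (satisfied here by quasi-projectivity), which is exactly the content of the EGA result the paper cites. Your divisor-extension step, by contrast, does go through on any curve once a Cartier representative is in hand, since the locus where a section of $\mathcal{K}_X^{\ast}$ fails to lie in $\mathcal{O}_X^{\ast}$ is closed and misses the generic points, hence is finite. So the missing piece is precisely the Cartier representability of $\mathcal{L}$ on a reducible or non-reduced quasi-projective curve, together with a correct replacement for the component-by-component reduction.
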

\begin{proof}
One way to proof this result (thanks to K\c{e}stutis \v{C}esnavi\v{c}ius for pointing it out) is to first find a Cartier divisor $D$ on $U$ whose associated line bundle is $\mathcal{L}$. The existence of such $D$ is guaranteed by \cite{grothendieck1967elements} Proposition 21.3.4 (a). Then apply \cite{grothendieck1967elements} Proposition 21.9.4 we can extend $D$ to a Cartier divisor $D^{\prime}$ on $X$, whose associated line bundle $\mathcal{L}^{\prime}$ gives an extension of $\mathcal{L}$.
\end{proof}

The next Proposition is the key step of our proof.

\begin{prop}\label{prop: Cartan homo has infinite image, reduced case}
Let $X$ be a reduced, irreducible, projective curve of geometric genus $g\geq 1$ over an algebraically closed field $k$, then the image of the Cartan homomorphism
$$
c: K_0(X)\to G_0(X)
$$
is not finitely generated.
\end{prop}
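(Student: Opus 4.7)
The plan is to transport the question from $G_0(X)$ to the Picard group of the smooth locus, where we already know from Corollary \ref{coro: Picard is infinitely generated} and Lemma \ref{lemma: Picard gp of U is infi generated} that things are badly non-finitely-generated. Let $Z\subset X$ be the finite set of singular points and $U=X\setminus Z$, with $j\colon U\hookrightarrow X$ the open (hence flat) immersion. Since $X$ is irreducible and reduced of geometric genus $g\geq 1$, $U$ is a non-empty open subscheme of the smooth projective curve $\widetilde{X}$, so by Lemma \ref{lemma: Picard gp of U is infi generated}, $\Pic(U)$ is not finitely generated.

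Next I would assemble, from the material already in the paper, the commutative diagram
$$
\begin{CD}
K_0(X) @>c>> G_0(X) @>\det\circ c^{-1}\cdot (\text{via }U)>> \\
@VVj^*V @VVj^*V \\
K_0(U) @>c>\cong> G_0(U)
\end{CD}
$$
together with the determinant maps $\det\colon K_0(X)\twoheadrightarrow\Pic(X)$ and $\det\colon K_0(U)\twoheadrightarrow\Pic(U)$ of Proposition \ref{prop: K_0 to Pic det}, and the restriction $r\colon\Pic(X)\to\Pic(U)$. Two inputs combine here: (i) $U$ is regular, so by Proposition \ref{prop: K=G for regular schemes} the bottom Cartan map $c\colon K_0(U)\to G_0(U)$ is an isomorphism; (ii) by Lemma \ref{lemma: ext of line bundles on singular curves} the restriction $r\colon\Pic(X)\to\Pic(U)$ is surjective, and by Proposition \ref{prop: K_0 to Pic det} it commutes with $\det$.

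From (ii) plus the surjectivity of $\det\colon K_0(X)\to\Pic(X)$, the composite
$$
K_0(X)\xrightarrow{j^*} K_0(U)\xrightarrow{\det}\Pic(U)
$$
is surjective. In particular, the image of $j^*\colon K_0(X)\to K_0(U)$ surjects onto $\Pic(U)$, hence is not finitely generated; transported across the isomorphism $c\colon K_0(U)\xrightarrow{\cong} G_0(U)$, the image of the composite $K_0(X)\xrightarrow{c} G_0(X)\xrightarrow{j^*} G_0(U)$ is not finitely generated either. Therefore the image of $c\colon K_0(X)\to G_0(X)$ cannot be finitely generated, for otherwise its further image in $G_0(U)$ under $j^*$ would be finitely generated, contradicting the previous sentence.

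The only real content is the diagram-chase; the main obstacle is largely bookkeeping, namely making sure that the two paths $K_0(X)\to K_0(U)\to G_0(U)$ and $K_0(X)\to G_0(X)\to G_0(U)$ agree (which is just functoriality of $Lj^*$), and that the image under $\det\colon K_0(U)\to\Pic(U)$ of the restricted image is honestly all of $\Pic(U)$ (which follows from $\det$ on $X$ being surjective and from Lemma \ref{lemma: ext of line bundles on singular curves}). Once these are in place, the non-finite-generation of $\Pic(U)$ from Lemma \ref{lemma: Picard gp of U is infi generated} immediately yields the proposition.
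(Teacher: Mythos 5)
Your proposal is correct and follows essentially the same route as the paper's own proof: restrict to the smooth locus $U$ where the Cartan map is an isomorphism, use the surjectivity of $\det\colon K_0(X)\to\Pic(X)$ and of $\Pic(X)\to\Pic(U)$ to see that $K_0(X)\to K_0(U)\to\Pic(U)$ is onto, and conclude from the non-finite-generation of $\Pic(U)$. The only difference is presentational (you argue directly where the paper argues by contradiction), so there is nothing to add.
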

\begin{proof}
First let $Z$ be the closed subset consisting of singular points of $X$ and $U=X-Z$ be the smooth open subscheme.  We have the restriction maps $r: K_0(X)\to K_0(U)$ and $r: G_0(X)\to G_0(U)$ and they give the  commutative diagram
$$
\begin{CD}
K_0(X) @>c>> G_0(X)\\
@VVrV @VVrV\\
K_0(U) @>c>> G_0(U)
\end{CD}
$$
Since $U$ is smooth, by Proposition \ref{prop: K=G for regular schemes} the bottom map is an isomorphism.

Now assume the image of the top map is finitely generated, then the image of the composition $r\circ c: K_0(X)\to G_0(U)$ is also finitely generated. Since we have the isomorphism $c: K_0(U)\overset{\cong}{\to}G_0(U)$, the left vertical map $r: K_0(X)\to K_0(U)$ must  also have  finitely generated image. Therefore the image of the composition
$$
K_0(X)\overset{r}{\to} K_0(U)\overset{\det}{\to}\Pic(U)
$$
is  finitely generated.

On the other hand we consider the commutative diagram
$$
\begin{CD}
K_0(X) @>\det>> \Pic(X)\\
@VVrV @VVrV\\
K_0(U) @>\det>> \Pic(U)
\end{CD}
$$
By Proposition \ref{prop: K_0 to Pic det} and Lemma \ref{lemma: ext of line bundles on singular curves}, the top and the right vertical map of the above diagram are surjective and so does their composition. As a result $\Pic(U)=\Pic(p^{-1}(U))$ is finitely generated, which is contradictory to Lemma \ref{lemma: Picard gp of U is infi generated}.
\end{proof}

\begin{coro}\label{coro: non-exist of full excep collection for factor through reduced case}
Let $X$ be a reduced, irreducible, projective curves of geometric genus $g\geq 1$ over an algebraically closed field $k$. If the inclusion $\rD^{\perf}(X)\to \rD^b(\coh(X))$ factors through a triangulated category $\mathcal{S}$, then $\mathcal{S}$ cannot have a full exceptional collection.
\end{coro}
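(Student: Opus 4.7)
The strategy is to combine the Grothendieck-group obstruction from Proposition \ref{prop: Cartan homo has infinite image, reduced case} with the fact that a full exceptional collection forces the Grothendieck group to be finitely generated (Proposition \ref{prop: Grothen group of full excep coll}).

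First I would unpack the assumption: a factorization of $\rD^{\perf}(X)\hookrightarrow \rD^b(\coh(X))$ through a triangulated category $\mathcal{S}$ means we have exact functors
$$
\rD^{\perf}(X)\xrightarrow{F}\mathcal{S}\xrightarrow{G}\rD^b(\coh(X))
$$
whose composition is the inclusion. Passing to Grothendieck groups, this yields group homomorphisms
$$
K_0(X)\xrightarrow{F_*}K_0(\mathcal{S})\xrightarrow{G_*}G_0(X)
$$
whose composition $G_*\circ F_*$ equals the Cartan homomorphism $c:K_0(X)\to G_0(X)$ of Definition \ref{defi: cartan homomorphism}.

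Now suppose for contradiction that $\mathcal{S}$ admits a full exceptional collection $\{A_1,\ldots,A_n\}$. Then by Proposition \ref{prop: Grothen group of full excep coll} we have $K_0(\mathcal{S})\cong\mathbb{Z}^n$, which is finitely generated as an abelian group. Consequently the image of $G_*:K_0(\mathcal{S})\to G_0(X)$ is finitely generated, and therefore so is the image of $G_*\circ F_* = c$. This directly contradicts Proposition \ref{prop: Cartan homo has infinite image, reduced case}, which asserts that for a reduced irreducible projective curve of geometric genus $g\geq 1$ over an algebraically closed field, the image of $c$ is not finitely generated.

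There is essentially no obstacle once the key Proposition \ref{prop: Cartan homo has infinite image, reduced case} is available; the only minor point to verify is the functoriality of $K_0$ on the given factorization, namely that the Grothendieck-group construction turns a composition of exact functors into a composition of group homomorphisms, which is immediate from the definition.
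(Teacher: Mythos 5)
Your proposal is correct and follows exactly the same route as the paper: pass the factorization to Grothendieck groups, identify the composite with the Cartan homomorphism, and derive a contradiction between Proposition \ref{prop: Grothen group of full excep coll} (which would force the image of $c$ to be finitely generated) and Proposition \ref{prop: Cartan homo has infinite image, reduced case}. The only difference is cosmetic: you spell out the intermediate functors and homomorphisms slightly more explicitly than the paper does.
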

\begin{proof}
The composition
$$
K_0(X)\to K_0(\mathcal{S})\to G_0(X)
$$
coincides with the Cartan homomorphism $c: K_0(X)\to  G_0(X)$. By Proposition \ref{prop: Cartan homo has infinite image, reduced case}, the image of the Cartan homomorphism is not finitely generated, hence $ K_0(\mathcal{S})$ is not finitely generated. Then by Proposition \ref{prop: Grothen group of full excep coll}, $\mathcal{S}$ cannot have a full exceptional collection.
\end{proof}

\begin{coro}\label{coro: non-exist of full excep collection reduced case}
Let $X$ be a reduced, irreducible, projective curves of geometric genus $g\geq 1$ over an algebraically closed field $k$. Let  $(\mathscr{T},\pi_*,\pi^*)$ be a categorial resolution of $X$. Then  $\mathscr{T}^c$ cannot have a full exceptional collection.
\end{coro}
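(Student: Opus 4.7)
The plan is to deduce this corollary directly from Corollary \ref{coro: non-exist of full excep collection for factor through reduced case} by taking $\mathcal{S}=\mathscr{T}^c$. All that needs to be verified is that the inclusion $\rD^{\perf}(X)\hookrightarrow \rD^b(\coh(X))$ factors as a composition of triangulated functors through $\mathscr{T}^c$; once that is in place, the preceding corollary closes the argument with no further work.

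To carry this out, I would unpack the axioms of a categorical resolution given in Definition \ref{defi: categorical resolution}. The remark immediately after that definition states that $\pi^*(\rD^{\perf}(X))\subset \mathscr{T}^c$, so $\pi^*$ restricts to a triangulated functor $\rD^{\perf}(X)\to \mathscr{T}^c$. Axiom (3) says that $\pi_*(\mathscr{T}^c)\subset \rD^b(\coh(X))$, so $\pi_*$ likewise restricts to a triangulated functor $\mathscr{T}^c\to \rD^b(\coh(X))$. The identity $\pi_*\circ \pi^*=\mathrm{id}$ from axiom (1) then forces the composition
$$
\rD^{\perf}(X)\xrightarrow{\pi^*}\mathscr{T}^c\xrightarrow{\pi_*}\rD^b(\coh(X))
$$
to coincide with the canonical inclusion $\rD^{\perf}(X)\hookrightarrow \rD^b(\coh(X))$. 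This supplies precisely the factorization hypothesis of Corollary \ref{coro: non-exist of full excep collection for factor through reduced case}.

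With this factorization established, Corollary \ref{coro: non-exist of full excep collection for factor through reduced case} applied to $\mathcal{S}=\mathscr{T}^c$ immediately yields that $\mathscr{T}^c$ admits no full exceptional collection. I do not anticipate any substantive obstacle: all the geometric content, including the infinite generation of $\Pic(U)$ for $g\geq 1$, has already been absorbed into Proposition \ref{prop: Cartan homo has infinite image, reduced case}, and what remains is purely the formal observation that a categorical resolution, by its very definition, factors the perfect-to-bounded-coherent inclusion through its compact part.
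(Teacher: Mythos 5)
Your proposal is correct and follows essentially the same route as the paper: both use the axioms of a categorical resolution (in particular $\pi_*\circ\pi^*=\mathrm{id}$, $\pi^*(\rD^{\perf}(X))\subset\mathscr{T}^c$, and $\pi_*(\mathscr{T}^c)\subset\rD^b(\coh(X))$) to see that the inclusion $\rD^{\perf}(X)\hookrightarrow\rD^b(\coh(X))$ factors through $\mathscr{T}^c$, and then invoke Corollary \ref{coro: non-exist of full excep collection for factor through reduced case} with $\mathcal{S}=\mathscr{T}^c$. Your write-up is if anything slightly more explicit than the paper's about why the restrictions of $\pi^*$ and $\pi_*$ are well defined.
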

\begin{proof}
By the definition of categorical resolution, the composition
$$
\rD^{\perf}(X)\overset{\pi^*}{\to}\mathscr{T}^c \overset{\pi_*}{\to}\rD^b(\coh(X))
$$
is the same as the inclusion $\rD^{\perf}(X)\hookrightarrow \rD^b(\coh(X))$. Therefore the composition
$$
K_0(X)\to K_0(\mathscr{T}^c)\to G_0(X)
$$
coincides with the Cartan homomorphism $c: K_0(X)\to  G_0(X)$. Then it is a direct consequence of Corollary \ref{coro: non-exist of full excep collection for factor through reduced case}.
\end{proof}

%\begin{rmk}\label{rmk: char=0}
%We require that the base field $k$ has characteristic zero because Lemma \ref{lemma: ext of line bundles on singular curves} requires so.
%\end{rmk}

\section{The general case of the main theorem}\label{section: the general case}
In this section we consider the case that $X$ is not irreducible nor reduced. In this case we still want to show that the image of the Cartan homomorphism $c: K_0(X)\to G_0(X)$ is not finitely generated but the proof is more involved.

Let $X_{\red}$ denote the associated reduced scheme of $X$ and $i:X_{\red}\to X$ the natural closed immersion. Then $X_{\red}$ is a  reduced, projective curve with the same geometric genus as $X$.

First we investigate the $g=0$ case, which is the following Proposition.

\begin{prop}\label{prop: g=0 has a full exc coll}
Let $X$ be a   projective curve over an algebraically closed field $k$ of geometric genus $g=0$, then $X$ has a categorical resolution $(\mathscr{T},\pi^*,\pi_*)$ such that $\mathscr{T}^c$  has a full exceptional collection.
\end{prop}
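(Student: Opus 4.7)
The plan is to invoke the general construction of categorical resolutions due to Kuznetsov and Lunts (\cite{kuznetsov2014categorical}) and then to verify, on each piece of the induced semiorthogonal decomposition, that a full exceptional collection is available.

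First I would pass to a geometric resolution $p: \widetilde{X} \to X$, where $\widetilde{X}$ is the normalization of $X_{\red}$. Since $X$ has geometric genus zero, so does $X_{\red}$, and hence each connected component of $\widetilde{X}$ is isomorphic to $\mathbb{P}^{1}$. Consequently $\rD^{b}(\coh(\widetilde{X}))$ admits the standard full exceptional collection assembled from $\{\mathcal{O}, \mathcal{O}(1)\}$ on each component. The Kuznetsov-Lunts construction then produces a categorical resolution $(\mathscr{T}, \pi^{*}, \pi_{*})$ of $X$ whose compact part fits into a semiorthogonal decomposition of the form
$$
\mathscr{T}^{c} = \langle \mathcal{A}_{1}, \ldots, \mathcal{A}_{s}, \rD^{b}(\coh(\widetilde{X})) \rangle ,
$$
where each $\mathcal{A}_{i}$ is supported at a singular (or non-reduced) point of $X$ and is equivalent to $\rD^{b}(\Lambda_{i}-\mathrm{mod})$ for a suitable finite-dimensional $k$-algebra $\Lambda_{i}$ of finite global dimension, attached to the local analytic type of the singularity (for example, a path algebra of a finite acyclic quiver with relations).

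Each such $\mathcal{A}_{i}$ admits a full exceptional collection, given by the indecomposable projective $\Lambda_{i}$-modules in an appropriate order. Concatenating these with the full exceptional collection on $\rD^{b}(\coh(\widetilde{X}))$ described above, in the order prescribed by the semiorthogonal decomposition, yields the desired full exceptional collection on $\mathscr{T}^{c}$.

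The step I expect to require the most care is the verification that the correction categories $\mathcal{A}_{i}$ in the Kuznetsov-Lunts construction really take the form $\rD^{b}(\Lambda_{i}-\mathrm{mod})$ with $\Lambda_{i}$ of the explicit shape above, uniformly across arbitrary (possibly non-reduced and reducible) projective curves. This requires unraveling the local construction of the categorical resolution at each singular point and checking that, in dimension one, the iterated blow-up plus order procedure terminates after finitely many steps at the normalization with correction algebras of finite global dimension of the desired form; in the rational reduced case one can alternatively appeal to the explicit tilting construction of \cite{burban2015singular} cited in Remark \ref{rmk: extist of f.d. algebra for g=0}, and for the non-reduced case the extra layers coming from the nilpotent filtration contribute only finitely many additional exceptional pieces. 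Once this local model is in hand, the assembly of the global full exceptional collection is formal.
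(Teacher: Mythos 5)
Your overall strategy --- invoke the Kuznetsov--Lunts construction and check each piece of the resulting semiorthogonal decomposition --- is the same as the paper's, but there is a genuine gap in how you describe and handle the ``correction'' pieces. In the actual construction of \cite{kuznetsov2014categorical} (Equations (59) and (61) there), $\mathscr{T}^c$ decomposes into several copies of $\rD^b(\coh(Z_i))$, where the $Z_i$ are the (reduced) centers of the iterated blow-ups, together with copies of $\rD^b(\coh((X_m)_{\red}))$. Since $X$ is a curve, each $Z_i$ is a reduced zero-dimensional scheme, i.e.\ a finite disjoint union of points, so $\rD^b(\coh(Z_i))$ trivially has a full exceptional collection; and $(X_m)_{\red}$ is a disjoint union of $\mathbb{P}^1$'s. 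Nothing like a ``correction algebra attached to the local analytic type of the singularity'' appears, and identifying such algebras is not needed.

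The gap in your version is the claim that each $\mathcal{A}_i \cong \rD^b(\Lambda_i\text{-mod})$ with $\Lambda_i$ finite-dimensional of finite global dimension, and that the indecomposable projective $\Lambda_i$-modules then form a full exceptional collection. That last implication is false in general: the projectives always form a tilting object, but they form an exceptional collection only when $\Lambda_i$ is \emph{directed} (each $\mathrm{End}(P_v)=k$ and the Hom's vanish in one direction), e.g.\ for an admissible quotient of a path algebra of an acyclic quiver. Finite global dimension does not force this. The paper itself stresses exactly this distinction in Remark \ref{rmk: extist of f.d. algebra for g=0}: the tilting object of \cite{burban2015singular} ``in general does not come from an exceptional collection,'' so your proposed fallback through that result does not close the gap either. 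To repair the argument you should replace your hypothesized local model by the actual decomposition from \cite{kuznetsov2014categorical}, where the extra pieces are derived categories of finite sets of reduced points and the question does not arise.
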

\begin{proof}
As we mentioned in the Introduction, the result in this Proposition is a direct consequence of the construction of categorical resolution in \cite{kuznetsov2014categorical}, although it is not explicitly stated in \cite{kuznetsov2014categorical}.

First \cite{kuznetsov2014categorical} Equation (59) in page 69 gives a chain
\begin{equation}
\xymatrix{X_m  \ar[r] &X_{m-1}\ar[r]&\ldots \ar[r]&X_1\ar[r]&X_0\ar@{=}[r]&X\\
& Z_{m-1}\ar@{^{(}->}[u] & &Z_1\ar@{^{(}->}[u]&Z_0\ar@{^{(}->}[u]}
\end{equation}
where each $X_{i+1}$ is the blowup of $X_i$ at the center $Z_i$ and $(X_m)_{\red}$ is smooth.

Moreover \cite{kuznetsov2014categorical} Equation (61) in page 71 tells us that there exists a categorical resolution $\mathscr{T}$ of $X$ such that its   subcategory $\mathscr{T}^c$ has the following semiorthogonal decomposition
\begin{equation}
\begin{split}
\mathscr{T}^c=&\langle\underbrace{\rD^b(\coh(Z_0))\ldots \rD^b(\coh(Z_0))}_{n_0 \text{ times}},\ldots,\\
&\underbrace{\rD^b(\coh(Z_{m-1}))\ldots \rD^b(\coh(Z_{m-1}))}_{n_{m-1} \text{ times}},\\
&\underbrace{\rD^b(\coh((X_m)_{\red}))\ldots \rD^b(\coh((X_m)_{\red}))}_{n_m \text{ times}}\rangle
\end{split}
\end{equation}
where the $n_i$'s are certain multiples given in \cite{kuznetsov2014categorical} after Equation (61) and we do not need their precise definition.

Since $X$ is of dimension $1$, each of the $Z_i$ is $0$-dimensional hence $\rD^b(\coh(Z_i))$ has a full exceptional collection. Moreover since $X$ is of genus $0$, we have $(X_m)_{\red}$ is a finite product of $\mathbb{P}^1$'s hence $\rD^b(\coh((X_m)_{\red}))$ also has a full exceptional collection. As a result $\mathscr{T}^c$ has a full exceptional collection.
\end{proof}

Then we consider the $g\geq 1$ case. By Definition \ref{defi: pull back of K_0 and G_0} and \ref{defi: push forward for G_0} we have the natural map
$$
i^*:K_0(X)\to K_0(X_{\red})
$$
and
$$
i_*:G_0(X_{\red})\to G_0(X).
$$

For $i_*$ we have the following "devissage" theorem.

\begin{thm}\label{thm: devissage}[\cite{weibel2013k} Chapter II Corollary 6.3.2]
Let $X$ be a Noetherian scheme, and $X_{\red}$ the associated reduced
scheme. Then $
i_*:G_0(X_{\red})\to G_0(X)
$ is an isomorphism.
\end{thm}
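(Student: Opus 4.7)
The plan is to apply the devissage principle in its $K_0$--version. The fundamental observation is that $i_*: \coh(X_{\red}) \to \coh(X)$ is fully faithful and exact, identifying $\coh(X_{\red})$ with the Serre subcategory of $\coh(X)$ consisting of sheaves annihilated by the nilpotent ideal sheaf $\mathcal{I} \subset \mathcal{O}_X$ that cuts out $X_{\red}$. Since $X$ is Noetherian, the ideal $\mathcal{I}$ is nilpotent, say $\mathcal{I}^n = 0$ for some $n \geq 1$; this is what allows finite filtrations below.

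First I would establish surjectivity of $i_*$ on $G_0$. Given any $\mathcal{F} \in \coh(X)$, consider the finite $\mathcal{I}$--adic filtration
$$
\mathcal{F} \supset \mathcal{I}\mathcal{F} \supset \mathcal{I}^2\mathcal{F} \supset \cdots \supset \mathcal{I}^n\mathcal{F} = 0.
$$
Each successive quotient $\mathcal{I}^k\mathcal{F}/\mathcal{I}^{k+1}\mathcal{F}$ is annihilated by $\mathcal{I}$, hence lies in the essential image of $i_*$. Splicing the short exact sequences $0 \to \mathcal{I}^{k+1}\mathcal{F} \to \mathcal{I}^k\mathcal{F} \to \mathcal{I}^k\mathcal{F}/\mathcal{I}^{k+1}\mathcal{F} \to 0$ and using the defining relations of $G_0$ yields
$$
[\mathcal{F}] = \sum_{k=0}^{n-1} \bigl[i_*(\mathcal{I}^k\mathcal{F}/\mathcal{I}^{k+1}\mathcal{F})\bigr] \quad \text{in } G_0(X),
$$
which proves surjectivity.

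For injectivity I would construct an inverse $\phi: G_0(X) \to G_0(X_{\red})$ by the formula $\phi([\mathcal{F}]) := \sum_{k\geq 0} [\mathcal{I}^k\mathcal{F}/\mathcal{I}^{k+1}\mathcal{F}]$, viewed on $X_{\red}$. A direct check shows $\phi \circ i_* = \mathrm{id}$ since $\mathcal{I}$ already annihilates $i_*\mathcal{G}$, so the filtration degenerates. The real content is verifying that $\phi$ is well-defined, i.e.\ respects short exact sequences $0 \to \mathcal{F}' \to \mathcal{F} \to \mathcal{F}'' \to 0$. The subtlety is that the induced $\mathcal{I}$--adic filtration on $\mathcal{F}$ does not split compatibly into the corresponding filtrations on $\mathcal{F}'$ and $\mathcal{F}''$; one must instead pass to a common refinement and apply a Jordan--H\"older / zig-zag argument to show that any two finite $\mathcal{I}$-stable filtrations with successive quotients killed by $\mathcal{I}$ produce the same element of $G_0(X_{\red})$.

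The main obstacle is precisely this well-definedness of $\phi$: independence of the chosen filtration. The cleanest route is to bypass the computation and invoke Quillen's devissage theorem for abelian categories directly, whose $K_0$--incarnation states that if $\mathcal{B} \subset \mathcal{A}$ is an exact inclusion of abelian categories such that every object of $\mathcal{A}$ admits a finite filtration by subobjects with all successive quotients in $\mathcal{B}$, then the induced map $K_0(\mathcal{B}) \to K_0(\mathcal{A})$ is an isomorphism. With $\mathcal{A} = \coh(X)$ and $\mathcal{B} = i_*(\coh(X_{\red}))$, the $\mathcal{I}$-adic filtration above verifies the hypothesis and the theorem follows immediately. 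Finally, one identifies $G_0$ with $G_0^{\text{na\"ive}}$ via Remark \ref{rmk: K naive and K} to conclude the statement on $G_0(X_{\red}) \to G_0(X)$.
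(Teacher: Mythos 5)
Your argument is correct: the paper itself offers no proof of this statement beyond citing Weibel, and your $\mathcal{I}$-adic filtration argument together with the appeal to devissage (for the subcategory of sheaves killed by $\mathcal{I}$, which is closed under subobjects and quotients, so the filtration-independence issue you rightly flag is exactly what devissage settles) is precisely the standard proof given in that reference. Your final step identifying $G_0$ with $G_0^{\text{na\"ive}}$ is also needed and correctly handled.
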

\begin{proof}
See \cite{weibel2013k} Chapter II Corollary 6.3.2.
\end{proof}

However, the following diagram
$$
\begin{CD}
K_0(X) @>c>> G_0(X)\\
@VVi^*V @A\cong Ai_*A\\
K_0(X_{\red}) @>c>> G_0(X_{\red})
\end{CD}
$$
does not commute. Hence we cannot directly apply the result in Section \ref{section: the reduced case} and need to find another way.

Let $X=\cup_{i=1}^m X_i$ be the decomposition into irreducible components, hence $X_{\red}=\cup_{i=1}^m (X_i)_{\red}$ (Do not confused with the $X_i$'s in the proof of Proposition \ref{prop: g=0 has a full exc coll}). Since $X$ has geometric genus $\geq 1$, at least one of the irreducible components $X_i$'s also has geometric genus $\geq 1$, say $X_1$.

For an non-empty, open, irreducible subscheme $U$ of $X_1$ we also consider $U_{\red}$. We can make $U$ small enough so that both $U$ and $U_{\red}$ are affine and $U_{\red}$  is smooth. Let $U=\Spec(A)$ and $U_{\red}=\Spec(A/I)$ where $I$ is the nilpotent radical of $A$ with $I^{l+1}=0$. Since $U$ is irreducible, $I$ is also the minimal prime ideal of $A$. Let $\mathcal{I}$ denote the associated sheaf on $U$.

Let us consider the diagram
$$
\begin{CD}
K_0(U) @>c>> G_0(U)\\
@VVi^*V @AAi_*A\\
K_0(U_{\red}) @>c>> G_0(U_{\red})
\end{CD}
$$
Again it does not commute. Nevertheless we will prove that it is not too far from commutative.

First let us fix the notations. Let $e_U$ denote the element $[\mathcal{O}_U]$ in $G_0(U)$ and $e_{U_{\red}}$ denote the element $[\mathcal{O}_{U_{\red}}]$ in $G_0(U_{\red})$.

\begin{lemma}\label{lemma: n times reduced in G_0}
We can choose $U$ small enough such that there is a non-zero integer $n$ such that
$$
e_U=n\,i_*(e_{U_{\red}}).
$$
\end{lemma}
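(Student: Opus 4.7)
The plan is to filter $\mathcal{O}_U$ by powers of the nilradical $\mathcal{I}$, identify the graded pieces as coherent sheaves pushed forward from $U_{\red}$, and then exploit the simple structure of $G_0$ of a smooth affine curve. Since $\mathcal{I}^{l+1}=0$, the finite filtration $\mathcal{O}_U=\mathcal{I}^0\supset\mathcal{I}\supset\cdots\supset\mathcal{I}^{l+1}=0$ together with the short exact sequences $0\to\mathcal{I}^{k+1}\to\mathcal{I}^k\to\mathcal{I}^k/\mathcal{I}^{k+1}\to 0$ gives, in $G_0(U)$,
\[
e_U=\sum_{k=0}^{l}[\mathcal{I}^k/\mathcal{I}^{k+1}].
\]
Each graded piece is annihilated by $\mathcal{I}$, hence of the form $i_*\mathcal{F}_k$ for a coherent sheaf $\mathcal{F}_k$ on $U_{\red}$, and $\mathcal{F}_0=\mathcal{O}_{U_{\red}}$. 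Therefore $e_U=i_*\bigl(\sum_{k=0}^{l}[\mathcal{F}_k]\bigr)$, and the lemma is equivalent to $\sum_{k}[\mathcal{F}_k]=n\cdot e_{U_{\red}}$ in $G_0(U_{\red})$ for some positive integer $n$, possibly after shrinking $U$.

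\textbf{The group $G_0(U_{\red})$.} The scheme $U_{\red}$ is a smooth affine curve, i.e.\ the spectrum of a Dedekind domain. By Proposition \ref{prop: K=G for regular schemes} the Cartan map $K_0(U_{\red})\to G_0(U_{\red})$ is an isomorphism, and combining the generic-rank map with the determinant of Proposition \ref{prop: K_0 to Pic det} gives a splitting $K_0(U_{\red})\cong\mathbb{Z}\oplus\Pic(U_{\red})$ via $[M]\mapsto(\mathrm{rank}(M),[\det M])$. Writing $[\mathcal{F}_k]=(r_k,\alpha_k)$ with $r_k\geq 0$ and $\alpha_k\in\Pic(U_{\red})$, the task reduces to arranging $\sum_{k}\alpha_k=0$ in $\Pic(U_{\red})$.

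\textbf{Shrinking to kill the Picard contributions.} Each $\alpha_k$ is the class of a line bundle on $U_{\red}$, representable by a Cartier divisor supported on a finite set of closed points of $U_{\red}$. Let $S$ be the union of these finitely many supports. Replacing $U$ by a smaller affine open $U'\subset U$ whose reduction is $U_{\red}\setminus S$, the localisation exact sequence on Picard groups
\[
\bigoplus_{p\in S}\mathbb{Z}\to\Pic(U_{\red})\to\Pic(U_{\red}\setminus S)\to 0
\]
forces every $\alpha_k$, being supported on $S$, to restrict to $0$ in $\Pic(U'_{\red})$. Re-running the filtration computation for $U'$ and relabelling, one obtains $e_U=\bigl(\sum_k r_k\bigr)\cdot i_*(e_{U_{\red}})=n\cdot i_*(e_{U_{\red}})$, and $n=\sum_k r_k\geq r_0=1$ since $\mathcal{F}_0=\mathcal{O}_{U_{\red}}$ has generic rank $1$.

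\textbf{Main obstacle.} The delicate point is the \emph{simultaneous} vanishing of all the Picard obstructions $\alpha_k$ by a single shrinking; everything else is bookkeeping around the nilpotent filtration. This simultaneous vanishing reduces, via the splitting $G_0\cong\mathbb{Z}\oplus\Pic$ for a smooth affine curve, to the standard surjectivity of the restriction map on Picard groups when finitely many closed points are deleted from a smooth curve.
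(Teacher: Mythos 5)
Your proof is correct, and it begins from the same decomposition as the paper: filtering $\mathcal{O}_U$ by powers of the nilradical $\mathcal{I}$ and reducing the lemma to showing that, after shrinking $U$, each graded piece $[\mathcal{I}^k/\mathcal{I}^{k+1}]$ is an integer multiple of $e_{U_{\red}}$ in $G_0(U_{\red})$. Where you genuinely diverge is in how that is achieved. The paper takes finite locally free resolutions of the graded pieces on the smooth scheme $U_{\red}$ and shrinks $U$ until every term of every resolution is free, so each class becomes an alternating sum of multiples of $e_{U_{\red}}$; since the resulting integer $n$ is then only an alternating sum of ranks, the paper needs a separate argument for $n\neq 0$, namely localizing at the generic point of $U$, where $G_0$ of the residue field is $\mathbb{Z}$ and detects vector-space dimensions. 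You instead invoke the splitting $G_0(U_{\red})\cong K_0(U_{\red})\cong \mathbb{Z}\oplus\Pic(U_{\red})$ by generic rank and determinant (valid because $U_{\red}$ is the spectrum of a Dedekind domain, using Proposition \ref{prop: K=G for regular schemes} and Proposition \ref{prop: K_0 to Pic det}), and shrink $U$ only by deleting the finitely many closed points supporting divisors representing the classes $\alpha_k=\det[\mathcal{F}_k]$. This buys you two things: the shrinking step becomes the elementary excision sequence for $\Pic$ of a smooth curve, and the non-vanishing of $n$ is immediate from $n=\sum_k r_k\geq r_0=1$, with no need for the generic-point localization. The trade-off is that your route is specific to dimension one, whereas the paper's free-resolution argument would apply to $U_{\red}$ smooth affine of any dimension. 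Two points worth making explicit in your write-up: generic rank is additive on short exact sequences (so it does descend to $G_0$), and the shrunken open $U'$ still contains the generic point of the irreducible $U$, so the ranks $r_k$ are unchanged by the restriction.
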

\begin{proof}
By Theorem \ref{thm: devissage}, $i_*$ is an isomorphism so it is sufficient to prove
$$
i_*^{-1}(e_U)=n \,e_{U_{\red}}
$$
 in   $G_0(U_{\red})$.

It is clear that in $G_0(U_{\red})$ we have
\begin{equation}\label{equa: decompose of e_U}
i_*^{-1}(e_U)=e_{U_{\red}}+[\mathcal{I}/\mathcal{I}^2]+\ldots+[\mathcal{I}^{l-1}/\mathcal{I}^l]+[\mathcal{I}^l].
\end{equation}
Each of the $\mathcal{I}^{k-1}/\mathcal{I}^k$ is a coherent sheaf on the smooth scheme $U_{\red}$ hence we have a resolution of finite length
$$
0\to \mathcal{P}^{m_k}_k\to \mathcal{P}^{m_k-1}_k\to \ldots \to  \mathcal{P}^{0}_k\to \mathcal{I}^{k-1}/\mathcal{I}^k ~\text{ for }1\leq k\leq l+1.
$$
where the $\mathcal{P}^{m_k-j}_k$'s are locally free sheaves on $U_{\red}$. We can shrink $U$ further to make all the $\mathcal{P}^{m_k-j}_k$'s are   free sheaves on $U_{\red}$. Hence for each $k$ there is an integer $n_k$ such that
$$
[\mathcal{I}^{k-1}/\mathcal{I}^k]=n_k e_{U_{\red}}
$$
and as a result there is an integer $n$ such that
$$
i_*^{-1}(e_U)=n \,e_{U_{\red}}
$$
 in   $G_0(U_{\red})$.

 We still need to show that $n \neq 0$. This can be achieved by localizing to the generic point of $U$. Recall that $I$ is the minimal prime ideal of $A$ hence $I$ corresponds to the generic point of $U$.

 Let us denote $A_I$, the localization of $A$ at $I$ by $B$ and denote the ideal $IB$ by $J$. Moreover we denote $\Spec(B)$ by $V$ and similarly  denote $\Spec(B/J)$ by $V_{\red}$. Let $f: V\to U$, $f_{\red}: V_{\red}\to U_{\red}$, and $j: V_{\red}\to V$ be the natural maps.

 \begin{comment}
 \begin{lemma}\label{lemma: base chage for G_0}
 Let
  $$
\begin{CD}
X^{\prime} @>g^{\prime}>> X\\
@Vf^{\prime}VV @VVfV\\
Y^{\prime} @>g>>Y
\end{CD}
$$
be a fiber product diagram of quasi-projective schemes. Assume that $f$ is proper and $g$ has finite flat dimension, and that that $X$ and $Y^{\prime}$ are Tor-independent over $Y$, i.e. for $q>0$ and all $x\in X$, $y^{\prime}\in Y^{\prime}$ and $y\in Y$ with $y=f(x)=g(y^{\prime})$ we have
$$
\text{Tor}_q^{\mathcal{O}_{Y,y}}(\mathcal{O}_{X,x},\mathcal{O}_{Y^{\prime},y^{\prime}})=0.
$$
 Then
$$
g^*f_*=f^{\prime}_*g^{\prime *}
$$
as maps $G_0(X)\to G_0(Y^{\prime})$.
 \end{lemma}
 \begin{proof}[Proof of Lemma \ref{lemma: base chage for G_0}] See \cite{srinivas1996algebraic} Proposition 5.13.
 \end{proof}

 Now we consider the diagram
 $$
 \begin{CD}
 V_{\red}@>f_{\red}>> U_{\red}\\
 @VjVV @VViV\\
 V @>f>> U
 \end{CD}
 $$
 Since $i$ is a closed immersion and $f$ is flat, it satisfies the requirement of Lemma \ref{lemma: base chage for G_0}. Hence we have the following commutative diagram
 $$
\begin{CD}
G_0(U_{\red}) @>i_*>> G_0(U)\\
@Vf_{\red}^*VV @VVf^*V\\
G_0(V_{\red}) @>j_*>> G_0(V)
\end{CD}
$$
\end{comment}

Since $f: V\to U$ is flat, we can define the pull-back map $f^*: G_0(U)\to G_0(V)$.

Let us denote the class $[\mathcal{O}_V]$ in $G_0(V)$ by $e_V$. By definition $f^*(e_U)=e_V$. If $e_U=0$ then we have $e_V=0$ and $j_*^{-1}(e_V)=0$.

On the other hand $B/J=A_I/I_I\cong \Frac(A/I)$ is a field hence $G_0(V_{\red})=G_0(B/J)\cong \mathbb{Z}$. Similar to Equation (\ref{equa: decompose of e_U}) we have
$$
j_*^{-1}(e_V)=[B/J]+[J/J^2]+\ldots +[J^{l-1}/J^l]+[J^l].
$$
 Each of the $J^{m-1}/J^m$ is a vector space over the field $B/J$ hence the right hand side cannot be zero in $G_0(V_{\red})$.
\end{proof}

\begin{prop}\label{prop: n times reduced for general elements in G_0}
Let $U$ and $n$ be as in Lemma \ref{lemma: n times reduced in G_0}. Then for any element $a\in K_0(U)$ we have
$$
c(a)=n\,i_*c\,i^*(a),
$$
i.e. the diagram
$$
\begin{CD}
K_0(U) @>c>> G_0(U)\\
@VVn\,i^*V @A\cong Ai_*A\\
K_0(U_{\red}) @>c>> G_0(U_{\red})
\end{CD}
$$
commutes.
\end{prop}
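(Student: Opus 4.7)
The plan is to exploit the $K_0(U)$-module structure on $G_0(U)$ established in Proposition \ref{prop: Cartan map is a module map}, combined with the projection formula for the closed immersion $i: U_{\red} \hookrightarrow U$. The Cartan map $c: K_0(U) \to G_0(U)$ is a morphism of $K_0(U)$-modules, and since it sends the multiplicative unit $1 \in K_0(U)$ to $[\mathcal{O}_U]=e_U$, for every $a \in K_0(U)$ one has $c(a) = a \cdot e_U$. The analogous identity $c(b) = b \cdot e_{U_{\red}}$ holds on the reduced side.

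Substituting the relation $e_U = n\, i_*(e_{U_{\red}})$ supplied by Lemma \ref{lemma: n times reduced in G_0} into $c(a) = a \cdot e_U$, I would obtain
$$
c(a) \;=\; n\, a \cdot i_*(e_{U_{\red}}).
$$
The decisive step is then the projection formula for the closed immersion $i$: for $a \in K_0(U)$ and $b \in G_0(U_{\red})$ one has
$$
a \cdot i_*(b) \;=\; i_*\bigl(i^*(a) \cdot b\bigr)
$$
in $G_0(U)$. Applying this with $b = e_{U_{\red}}$ gives $a \cdot i_*(e_{U_{\red}}) = i_*\bigl(i^*(a)\cdot e_{U_{\red}}\bigr) = i_*\bigl(c(i^*(a))\bigr)$, where the second equality is the module identity $c(i^*(a)) = i^*(a)\cdot e_{U_{\red}}$ for the Cartan map of $U_{\red}$. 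Combining these equalities yields $c(a) = n\, i_* c\, i^*(a)$, which is exactly the commutativity of the displayed diagram.

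The main obstacle is justifying the projection formula in this setting. Since $i$ is a closed immersion, $i_*$ is already exact on quasi-coherent sheaves and agrees with $Ri_*$, while the pullback $i^*$ on $K_0$ is defined via $Li^*$ (Definition \ref{defi: pull back of K_0 and G_0}). The derived identity
$$
a \otimes^{\mathbb{L}}_{\mathcal{O}_U} i_* b \;\cong\; i_*\bigl(Li^*(a) \otimes^{\mathbb{L}}_{\mathcal{O}_{U_{\red}}} b\bigr)
$$
holds whenever $a \in \rD^{\perf}(U)$, precisely because $a$ is locally a bounded complex of free sheaves, so no Tor-independence hypothesis on $i$ is required. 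Taking classes in $G_0(U)$ produces the needed equality, and after that the argument is just the chain of identities above, combining the module property of the Cartan map with Lemma \ref{lemma: n times reduced in G_0}.
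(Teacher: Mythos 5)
Your proposal is correct and follows essentially the same route as the paper: the paper likewise writes $c(a)=a\cdot e_U$, substitutes $e_U=n\,i_*(e_{U_{\red}})$ from Lemma \ref{lemma: n times reduced in G_0}, and then applies the projection formula (packaged there as Lemma \ref{lemma: i_* is a module map}, the statement that $i_*$ is a $K_0(U)$-module map) to conclude. Your extra remark justifying the derived projection formula for the closed immersion via perfectness of $a$ is a detail the paper leaves implicit, but the argument is the same.
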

\begin{proof}
We need the following lemma.

\begin{lemma}\label{lemma: i_* is a module map}
For any Noetherian scheme $U$, $G_0(U_{\red})$ has a $K_0(U)$-module structure. Moreover, the map $i_*: G_0(U_{\red})\to G_0(U)$ is a morphism of $K_0(U)$-modules.
\end{lemma}
\begin{proof}[Proof of Lemma \ref{lemma: i_* is a module map}]
First  the $K_0(U)$-module structure on $G_0(U_{\red})$ is given by composing with $i^*$. More explicitly, for $a\in K_0(U)$ and $m\in G_0(U_{\red})$ we define
$$
a\cdot m= i^*(a)\cdot m
$$
where the right hand side uses the $K_0(U_{\red})$-module structure on $G_0(U_{\red})$.

Then we need to show that $i_*$ is a $K_0(U)$-module map, i.e.
$$
i_*(i^*(a)\cdot m)=a\cdot i_*(m).
$$
But this is exactly the projection formula.
\end{proof}

Now we can prove Proposition \ref{prop: n times reduced for general elements in G_0}. Let us denote $[\mathcal{O}_U]\in K_0(U)$ by $1_U$ and $[\mathcal{O}_{U_{\red}}]\in K_0(U_{\red})$ by $1_{U_{\red}}$. Then it is clear that
$$
c(1_U)= e_U \text{ and } c(1_{U_{\red}})=e_{U_{\red}}.
$$
Then for any $a\in K_0(U)$ we have
\begin{equation*}
\begin{split}
c(a)=&c(a\cdot 1_U)\\
=&a\cdot e_U\\
=&a\cdot (n i_*(e_{U_{\red}})) (\text{ Lemma } \ref{lemma: n times reduced in G_0})\\
=&n(a\cdot  i_*(e_{U_{\red}}))\\
=&ni_*(i^*(a)\cdot e_{U_{\red}}) (\text{ Lemma } \ref{lemma: i_* is a module map})\\
=&n\,i_*c\,i^*(a).
\end{split}
\end{equation*}
\end{proof}

Now we are ready to prove the following Proposition, which is the general version of Proposition \ref{prop: Cartan homo has infinite image, reduced case}.

\begin{prop}\label{prop: Cartan homo has infinite image, general case}
Let $X$ be a   projective curves of geometric genus $g\geq 1$ over an algebraically closed field $k$, then the image of the Cartan homomorphism
$$
c: K_0(X)\to G_0(X)
$$
is not finitely generated.
\end{prop}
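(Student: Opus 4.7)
The plan is to adapt the argument of Proposition \ref{prop: Cartan homo has infinite image, reduced case}, using Proposition \ref{prop: n times reduced for general elements in G_0} as the bridge between the possibly non-reduced open $U$ and its smooth reduction $U_{\red}$. By hypothesis at least one irreducible component, say $X_1$, has geometric genus $\geq 1$. First I would pick $U$ as in Lemma \ref{lemma: n times reduced in G_0} and Proposition \ref{prop: n times reduced for general elements in G_0}, chosen inside $X_1$ and small enough that $U$ and $U_{\red}$ are affine, $U_{\red}$ is smooth, and the integer $n \neq 0$ exists; I also arrange for $U$ to be a genuine open of $X$ so the usual restriction maps are defined. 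Then $U_{\red}$ is a non-empty open of the smooth projective model $\widetilde{X_1}$ of $X_1$, so Lemma \ref{lemma: Picard gp of U is infi generated} applies to it.

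Assume for contradiction that $\text{im}(c\colon K_0(X)\to G_0(X))$ is finitely generated. The restriction square
\begin{equation*}
\begin{CD}
K_0(X) @>c>> G_0(X)\\
@VVrV @VVrV\\
K_0(U) @>c>> G_0(U)
\end{CD}
\end{equation*}
then forces the image of $K_0(X)\to K_0(U)\to G_0(U)$ to be finitely generated. For any $\alpha\in K_0(X)$, Proposition \ref{prop: n times reduced for general elements in G_0} applied to $r(\alpha)\in K_0(U)$ gives $c(r(\alpha)) = n\,i_*\bigl(c(i^*(r(\alpha)))\bigr)$; since $i_*$ is an isomorphism by Theorem \ref{thm: devissage}, the subgroup $n\cdot\text{im}(K_0(X)\to K_0(U_{\red})\to G_0(U_{\red}))$ of $G_0(U_{\red})$ is finitely generated. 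Because $U_{\red}$ is smooth, Proposition \ref{prop: K=G for regular schemes} makes $c\colon K_0(U_{\red})\to G_0(U_{\red})$ an isomorphism, so $n\cdot \text{im}(K_0(X)\to K_0(U_{\red}))$ is finitely generated as well.

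Next I would push through the determinant. Applying the group homomorphism $\det\colon K_0(U_{\red})\to \Pic(U_{\red})$ from Proposition \ref{prop: K_0 to Pic det} yields that $n\cdot\text{im}(K_0(X)\to \Pic(U_{\red}))$ is finitely generated. By the naturality of $\det$ and the surjectivity of $\det\colon K_0(X)\to \Pic(X)$, this image coincides with the image of the composite $\Pic(X)\to \Pic(U)\to \Pic(U_{\red})$. The first arrow is surjective by Lemma \ref{lemma: ext of line bundles on singular curves}, while the second is surjective (in fact an isomorphism) because $U$ is affine and $U_{\red}\hookrightarrow U$ is cut out by a nilpotent ideal, so rank-one projective modules lift uniquely along the square-zero filtration. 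Hence $n\cdot \Pic(U_{\red})$ is finitely generated, contradicting Lemma \ref{lemma: Picard gp of U is infi generated}.

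The main obstacle I anticipate is the last surjectivity $\Pic(U)\twoheadrightarrow \Pic(U_{\red})$: it requires an argument on nilpotent thickenings distinct from the curve-extension argument of Lemma \ref{lemma: ext of line bundles on singular curves}. A secondary subtlety is ensuring $U$ can be viewed simultaneously as an open of $X_1$ (so that Proposition \ref{prop: n times reduced for general elements in G_0} applies) and as an open of $X$ (so that the restriction $K_0(X)\to K_0(U)$ is available). Once these are in place, the proof is a direct unwinding of Proposition \ref{prop: n times reduced for general elements in G_0}, Theorem \ref{thm: devissage}, and the determinant naturality, in the spirit of Proposition \ref{prop: Cartan homo has infinite image, reduced case}.
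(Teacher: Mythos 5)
Your proposal is correct, and its skeleton (restrict to a small affine irreducible open $U\subset X_1$, use Proposition \ref{prop: n times reduced for general elements in G_0} and Theorem \ref{thm: devissage} to transfer the finite generation hypothesis to $n\cdot\mathrm{im}\bigl(K_0(X)\to K_0(U_{\red})\bigr)$, then push through $\det$ to contradict Lemma \ref{lemma: Picard gp of U is infi generated}) is exactly the paper's. The one genuine divergence is the final surjectivity chain onto $\Pic(U_{\red})$: the paper factors as $K_0(X)\xrightarrow{\det}\Pic(X)\xrightarrow{i^*}\Pic(X_{\red})\xrightarrow{r}\Pic(U_{\red})$, invoking Lemma \ref{lemma: Pic is surj when pull back to reduced} (Liu, Lemma 7.5.11) for the middle arrow and Lemma \ref{lemma: ext of line bundles on singular curves} only for the reduced curve $X_{\red}$; you instead factor as $\Pic(X)\xrightarrow{r}\Pic(U)\xrightarrow{i^*}\Pic(U_{\red})$, using Lemma \ref{lemma: ext of line bundles on singular curves} for the non-reduced curve $X$ itself and then the invariance of the Picard group under nilpotent thickenings of affine schemes (lifting of finitely generated projectives along a nilpotent ideal). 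The step you flag as the ``main obstacle'' is in fact standard and correct, and your route has the mild advantages of being more elementary at that step and of not needing the connectedness hypothesis appearing in Lemma \ref{lemma: Pic is surj when pull back to reduced}; the trade-off is that you lean on Lemma \ref{lemma: ext of line bundles on singular curves} in the non-reduced setting (which is how the paper states it, but is the one place your argument asks slightly more of that lemma than the paper does). Your secondary worry about $U$ being simultaneously open in $X_1$ and in $X$ is resolved by shrinking $U$ into $X_1\setminus\bigcup_{i\geq 2}X_i$, which the paper does implicitly as well.
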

\begin{proof}

First let $U$ be as in Lemma \ref{lemma: n times reduced in G_0} and Proposition \ref{prop: n times reduced for general elements in G_0}. By Proposition \ref{prop: n times reduced for general elements in G_0} and Theorem \ref{thm: devissage}there is a non-zero integer $n$ such that the following diagram commutes
$$
\begin{CD}
K_0(U) @>c>> G_0(U)\\
@VVn\,i^*V @VV(i_*)^{-1}V\\
K_0(U_{\red}) @>c>> G_0(U_{\red})
\end{CD}
$$
hence the diagram
$$
\begin{CD}
K_0(X) @>c>> G_0(X)\\
@VrVV @VVrV\\
K_0(U) @>c>> G_0(U)\\
@VVn\,i^*V @VV(i_*)^{-1}V\\
K_0(U_{\red}) @>c>> G_0(U_{\red})
\end{CD}
$$
commutes. For short we have
$$
\begin{CD}
K_0(X) @>c>> G_0(X)\\
@VVn\,i^*  rV @VV(i_*)^{-1} rV\\
K_0(U_{\red}) @>c>> G_0(U_{\red})
\end{CD}
$$

Now assume the image of $c: K_0(X) \to G_0(X)$ is finitely generated. Since $U_{\red}$ is smooth, the $c: K_0(U_{\red}) \to G_0(U_{\red})$ in the above diagram is an isomorphism, hence the image of $n\,i^* r$ is also finitely generated.

Next we observe that we have the commutative diagrams
$$
\begin{CD}
K_0(X) @>n\,i^*>> K_0(X_{\red})\\
@VVrV @VV rV\\
K_0(U) @>n\,i^*>> G_0(U_{\red})
\end{CD}
$$
and
$$
\begin{CD}
K_0(X) @>\det>> \Pic(X)\\
@Vn\,i^*VV @VVn\,i^*V\\
K_0(X_{\red}) @>\det>> \Pic(X_{\red})\\
@VVrV @VVrV\\
K_0(U_{\red}) @>\det>> \Pic(U_{\red})
\end{CD}
$$

From the left-bottom composition of the above diagram we know that the image of $\det\circ r\circ (n\,i^*)$ is finitely generated.

On the other hand we will study the top-right composition of the above diagram. By Proposition \ref{prop: K_0 to Pic det} the map det is   surjective and by Lemma \ref{lemma: ext of line bundles on singular curves} the map $r$ is also surjective. As for the map $i^*$ we need the following lemma.

\begin{lemma}\label{lemma: Pic is surj when pull back to reduced}[\cite{liu2002algebraic} Lemma 7.5.11]
Let $X$ be a connected projective curve over an algebraically closed field $k$, Then $i^*:  \Pic(X)\to \Pic(X_{\red})$ is surjective.
\end{lemma}
\begin{proof}[Proof of Lemma \ref{lemma: Pic is surj when pull back to reduced}] See \cite{liu2002algebraic} Lemma 7.5.11.
\end{proof}

Then it is clear that the image of $r\circ(ni^*)\circ \det$ is $n\Pic(U_{\red})$. Compare with the left-bottom composition we get the conclusion that $n\Pic(U_{\red})$ is finitely generated, which is contradictory to Lemma \ref{lemma: Picard gp of U is infi generated}.
\end{proof}

\begin{coro}\label{coro: non-exist of full excep collection for factor through general case}
Let $X$ be a  projective curves of geometric genus $g\geq 1$ over an algebraically closed field $k$. If the inclusion $\rD^{\perf}(X)\to \rD^b(\coh(X))$ factors through a triangulated category $\mathcal{S}$, then $\mathcal{S}$ cannot have a full exceptional collection.
\end{coro}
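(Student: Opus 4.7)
The plan is to mimic exactly the proof of Corollary \ref{coro: non-exist of full excep collection for factor through reduced case}, but with Proposition \ref{prop: Cartan homo has infinite image, reduced case} replaced by the just-established Proposition \ref{prop: Cartan homo has infinite image, general case}, which removes the reducedness and irreducibility hypotheses. So the geometric/algebraic work has already been done; the remaining step is purely formal.

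Concretely, suppose the inclusion $\iota: \rD^{\perf}(X) \hookrightarrow \rD^b(\coh(X))$ factors as
\[
\rD^{\perf}(X) \xrightarrow{F} \mathcal{S} \xrightarrow{G} \rD^b(\coh(X)).
\]
Applying $K_0$ to both triangulated functors, one gets a factorization of the Cartan homomorphism
\[
K_0(X) \xrightarrow{F_*} K_0(\mathcal{S}) \xrightarrow{G_*} G_0(X),
\]
whose composite is $c$. Therefore the image of $c$ is contained in the image of $G_*$, hence is a subgroup of a quotient of $K_0(\mathcal{S})$, and in particular is a quotient of a subgroup of $K_0(\mathcal{S})$ in the abelian-group sense; more directly, $\mathrm{im}(c) = G_*(\mathrm{im}(F_*))$ is a homomorphic image of $K_0(\mathcal{S})$ when we restrict $G_*$ to $\mathrm{im}(F_*)$. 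So if $K_0(\mathcal{S})$ were finitely generated, then $\mathrm{im}(c)$ would be finitely generated too.

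If $\mathcal{S}$ admitted a full exceptional collection $\{A_1,\dots,A_n\}$, then by Proposition \ref{prop: Grothen group of full excep coll} we would have $K_0(\mathcal{S}) \cong \mathbb{Z}^n$, which is finitely generated. By the previous paragraph, the image of $c: K_0(X) \to G_0(X)$ would then be finitely generated, directly contradicting Proposition \ref{prop: Cartan homo has infinite image, general case}. Hence no such full exceptional collection can exist.

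The main (and only real) obstacle has already been surmounted in Proposition \ref{prop: Cartan homo has infinite image, general case}; there is no further obstacle in this corollary. The small subtlety worth noting is the one the author already flagged in the reduced case: one must verify that the composite $G_* \circ F_*$ on $K_0$ really does equal the Cartan map $c$. This is automatic because $\iota = G\circ F$ is a triangulated functor and $K_0$ is functorial on triangulated functors, so $K_0(\iota) = K_0(G)\circ K_0(F) = c$. After that observation the proof is a one-line deduction from Propositions \ref{prop: Grothen group of full excep coll} and \ref{prop: Cartan homo has infinite image, general case}.
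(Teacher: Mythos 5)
Your proposal is correct and follows essentially the same route as the paper: factor the Cartan homomorphism as $K_0(X)\to K_0(\mathcal{S})\to G_0(X)$, note that a full exceptional collection would force $K_0(\mathcal{S})\cong\mathbb{Z}^n$ by Proposition \ref{prop: Grothen group of full excep coll}, and derive a contradiction with Proposition \ref{prop: Cartan homo has infinite image, general case}. The paper simply states that the proof is the same as that of Corollary \ref{coro: non-exist of full excep collection for factor through reduced case} with the general version of the key proposition substituted, which is exactly what you carried out.
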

\begin{proof}
The proof is almost the same as that of Corollary \ref{coro: non-exist of full excep collection for factor through reduced case} except that we use Proposition \ref{prop: Cartan homo has infinite image, general case} instead of Proposition \ref{prop: Cartan homo has infinite image, reduced case}.
\end{proof}

\begin{thm}\label{thm: non-exist of full excep collection general case}[See Theorem \ref{thm: non-exist of full excep collection in the introduction}]
Let $X$ be a projective curve over an algebraically closed field $k$. Let $(\mathscr{T},\pi^*,\pi_*)$ be a categorical resolution of $X$. If the geometric genus of $X$ is $\geq 1$, then $\mathscr{T}^c$ cannot have a full exceptional collection.

In other words, $X$ has a categorical resolution which admits a full exceptional collection if and only if the geometric genus of $X$ equals to $0$.
\end{thm}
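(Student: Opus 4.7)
The plan is to deduce the theorem directly from Corollary \ref{coro: non-exist of full excep collection for factor through general case} together with Proposition \ref{prop: g=0 has a full exc coll}. All of the substantive work---namely that the image of the Cartan homomorphism $c: K_0(X) \to G_0(X)$ is not finitely generated when $g \geq 1$---has already been carried out in Proposition \ref{prop: Cartan homo has infinite image, general case}, and is packaged into the aforementioned corollary. So what remains is merely a bookkeeping step showing that a categorical resolution provides exactly the kind of factorization that corollary asks for.

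First I would verify that the data of a categorical resolution $(\mathscr{T}, \pi^*, \pi_*)$ produces a factorization of the inclusion $\rD^{\perf}(X)\hookrightarrow \rD^b(\coh(X))$ through $\mathscr{T}^c$. Concretely: by the remark after Definition \ref{defi: categorical resolution}, $\pi^*$ restricts to a functor $\rD^{\perf}(X)\to \mathscr{T}^c$; by property (3) of Definition \ref{defi: categorical resolution}, $\pi_*$ restricts to a functor $\mathscr{T}^c\to \rD^b(\coh(X))$; and by property (1), $\pi_*\circ \pi^* = \mathrm{id}$, so that the composition
$$
\rD^{\perf}(X)\xrightarrow{\pi^*}\mathscr{T}^c\xrightarrow{\pi_*}\rD^b(\coh(X))
$$
coincides with the natural inclusion. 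Hence the hypothesis of Corollary \ref{coro: non-exist of full excep collection for factor through general case} is met with $\mathcal{S} = \mathscr{T}^c$, and that corollary immediately yields that $\mathscr{T}^c$ cannot admit a full exceptional collection when $g \geq 1$.

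For the "if and only if" reformulation, the reverse direction is exactly Proposition \ref{prop: g=0 has a full exc coll}, which constructs a categorical resolution with a full exceptional collection when $g = 0$ out of the Kuznetsov construction. The forward direction is the contrapositive of what was just proved. Combining these completes the proof.

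There is really no main obstacle at this stage---the theorem is a clean corollary of the machinery developed in Sections \ref{section: the reduced case} and \ref{section: the general case}. The genuine difficulty was in establishing Proposition \ref{prop: Cartan homo has infinite image, general case}, where one had to thread the non-commutativity of pullback along $i: X_{\red}\to X$ via the multiplication-by-$n$ trick of Lemma \ref{lemma: n times reduced in G_0} and Proposition \ref{prop: n times reduced for general elements in G_0}. At this point, one simply invokes the already proved non-finite-generation of the image of $c$ and observes that any triangulated category $\mathscr{T}^c$ through which this map factors must have non-finitely-generated Grothendieck group, which by Proposition \ref{prop: Grothen group of full excep coll} precludes a full exceptional collection.
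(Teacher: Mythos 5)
Your proposal is correct and follows essentially the same route as the paper: the paper likewise reduces the theorem to Corollary \ref{coro: non-exist of full excep collection for factor through general case} for the $g\geq 1$ direction and to Proposition \ref{prop: g=0 has a full exc coll} for the converse, with the factorization of $\rD^{\perf}(X)\hookrightarrow \rD^b(\coh(X))$ through $\mathscr{T}^c$ spelled out exactly as you do (cf.\ the proof of Corollary \ref{coro: non-exist of full excep collection reduced case}). No gaps.
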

\begin{proof}
Since we have Proposition \ref{prop: g=0 has a full exc coll}, it is sufficient to prove the first claim of the theorem, which is a direct consequence of Corollary  \ref{coro: non-exist of full excep collection for factor through general case}.
\end{proof}

\begin{rmk}\label{rmk: haven't use T is smooth}
In the proof we did not use the fact the $\mathscr{T}$ is a smooth triangulated category.
\end{rmk}

\begin{rmk}\label{rmk: the proof does not work for non-algebraically closed field}
The proof of Theorem \ref{thm: non-exist of full excep collection general case} fails if the base field $k$ is not algebraically closed. The main reason is when $k$ is not algebraically closed, the picard group may be finitely generated. See Remark \ref{rmk: picard group non-algebraic closed case} after Corollary \ref{coro: Picard is infinitely generated}.

Nevertheless, we expect that the result of Theorem \ref{thm: non-exist of full excep collection general case} is still true in the non-algebraically closed case. We believe  that a proof could be achieved through a systematic study of the behavior of categorical resolution under scalar extension and we will leave this topic for a future paper.
\end{rmk}

It is worthwhile  to mention that we have another application of Proposition \ref{prop: Cartan homo has infinite image, general case} (thanks to Igor Burban for pointing it out).

\begin{thm}\label{thm: non-exist of f.d algebra as a cat resolution}
Let $X$ be a projective curve over an algebraically closed field $k$ of geometric genus $\geq 1$. Let $(\mathscr{T},\pi^*,\pi_*)$ be a categorical resolution of $X$. Then $\mathscr{T}^c$ cannot have a tilting object, moreover there cannot be a finite dimensional $k$-algebra $\Lambda$ of finite global dimension such that
$$
\mathscr{T}^c\cong \rD^b(\Lambda-\text{mod})
$$
where $\rD^b(\Lambda-\text{mod})$ is the derived category of bounded complexes of finitely generated $\Lambda$-modules.
\end{thm}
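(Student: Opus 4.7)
The plan is to leverage Proposition~\ref{prop: Cartan homo has infinite image, general case}, which establishes that the image of the Cartan map $c\colon K_0(X)\to G_0(X)$ is not finitely generated. For any categorical resolution $(\mathscr{T},\pi^*,\pi_*)$ the composition $K_0(X)\xrightarrow{\pi^*} K_0(\mathscr{T}^c)\xrightarrow{\pi_*} G_0(X)$ coincides with $c$, exactly as in the proof of Corollary~\ref{coro: non-exist of full excep collection reduced case}. Consequently any hypothesis forcing $K_0(\mathscr{T}^c)$ to be a finitely generated abelian group contradicts Proposition~\ref{prop: Cartan homo has infinite image, general case}, and the whole argument reduces to showing that the two situations listed in the theorem entail such finite generation.

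For the second assertion, I would assume $\mathscr{T}^c\cong \rD^b(\Lambda-\text{mod})$ with $\Lambda$ a finite dimensional $k$-algebra of finite global dimension and prove that $K_0(\mathscr{T}^c)$ is finitely generated. The natural map $K_0(\Lambda-\text{mod})\to K_0(\rD^b(\Lambda-\text{mod}))$ sending a module to the class of its complex concentrated in degree zero is an isomorphism, a standard fact about the Grothendieck group of the bounded derived category of an abelian category. Since $\Lambda$ is finite dimensional over $k$ it is Artinian and has only finitely many isomorphism classes of simple modules $S_1,\ldots,S_r$; a d\'evissage along composition series then shows that the classes $[S_i]$ generate $K_0(\Lambda-\text{mod})$. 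Hence $K_0(\mathscr{T}^c)\cong \mathbb{Z}^r$ is finitely generated, contradicting Proposition~\ref{prop: Cartan homo has infinite image, general case}.

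For the first assertion, let $L\in\mathscr{T}^c$ be a tilting object and set $\Lambda=\text{End}_{\mathscr{T}^c}(L)$. By Definition~\ref{defi: tilting object} one obtains an equivalence $\mathscr{T}^c\cong \rD^b(\Lambda-\text{mod})$. Finite dimensionality of $\Lambda$ over $k$ should follow from the compactness of $L$ together with the Hom-finiteness transported through $\pi_*$ to $\rD^b(\coh(X))$, and finite global dimension of $\Lambda$ reflects the smoothness axiom in Definition~\ref{defi: categorical resolution}. With these two properties in hand, the first assertion reduces to the second.

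The main obstacle I anticipate is the precise verification that the endomorphism ring of a tilting object in $\mathscr{T}^c$ is automatically a finite dimensional $k$-algebra of finite global dimension; this requires a careful use of the compact-generation and smoothness axioms that define a categorical resolution. Once this is granted, the whole argument is purely formal: finite generation of the algebraic-side $K_0$ collides with the non-finite generation of the geometric-side image of the Cartan homomorphism proved in Proposition~\ref{prop: Cartan homo has infinite image, general case}, yielding both assertions simultaneously.
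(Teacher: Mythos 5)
Your proposal follows essentially the same route as the paper: the paper likewise reduces both assertions to showing that $K_0(\rD^b(\Lambda-\text{mod}))$ is finitely generated, which it proves exactly as you do, via composition series and the finiteness of the set of isomorphism classes of simple modules over the Artinian algebra $\Lambda$, and then derives a contradiction with Proposition \ref{prop: Cartan homo has infinite image, general case}. The finite dimensionality of $\text{End}_{\mathscr{T}^c}(L)$ that you flag as the main obstacle is precisely what the paper leaves implicit in Definition \ref{defi: tilting object}, and, as the paper itself remarks, the finite global dimension of $\Lambda$ is never actually used in the $K_0$ argument.
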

\begin{proof}
With Proposition \ref{prop: Cartan homo has infinite image, general case} it is sufficient to prove that the Grothendieck group $K_0(\rD^b(\Lambda-\text{mod}))$ is finitely generated. The proof is  as follows: Since $\Lambda$ is finite dimensional, it is a finitely generated Artinian $k$-algebra, hence every finitely generated $\Lambda$-module has a composition series. Moreover the set of isomorphic classes of simple $\Lambda$-module is finite. We get the desired result.
\end{proof}

\begin{rmk}
Again in the proof we did not use that fact that $\Lambda$ is of finite global dimension, which corresponds to the smoothness of $\mathscr{T}$ .
\end{rmk}

\section*{Acknowledgement}
The author first wants to thank Valery Lunts for introducing him to this topic and for very helpful comments and suggestions during this work. Igor Burban suggests the author to investigate the non-irreducible case and shares the ideas on tilting object and the author is grateful to him too. Moreover the author would like to thank Dave Anderson,
K\c{e}stutis \v{C}esnavi\v{c}ius , Georges Elencwajg, Volodimir Gavran, Adeel Khan, S\'{a}ndor Kov\'{a}cs, and Jason Starr for their help on algebraic K-theory and the theory of algebraic curves.

\bibliography{fullexcepcurvesbib}{}
\bibliographystyle{plain}

\end{document}